\title{Exponential rarefaction of  maximal real algebraic hypersurfaces}
\author{Michele Ancona}
\thanks{Tel Aviv University, School of Mathematical Sciences; e-mail: \url{michi.ancona@gmail.com}. The author is supported by the Israeli Science Foundation through the ISF Grants 382/15 and 501/18.}
\date{}
\theoremstyle{plain}
\newtheorem{thm}{Theorem}[section]
\newtheorem{lemma}[thm]{Lemma}
\newtheorem{prop}[thm]{Proposition}
\newtheorem{oss}[thm]{Remark}
\newtheorem{example}[thm]{Example}
\theoremstyle{definition}
\newtheorem{defn}[thm]{Definition}
\newtheorem{conv}[thm]{Notation}
\newcommand{\R}{\mathbb{R}}
\newcommand{\C}{\mathbb{C}}
\newcommand{\Vol}{\textrm{Vol}}
\newcommand{\conj}{\textit{conj}}
\begin{document}
\maketitle
\begin{abstract} 
Given an ample real Hermitian holomorphic line bundle $L$ over a real algebraic variety $X$, the space of real holomorphic   sections of $L^{\otimes d}$ inherits a natural Gaussian probability  measure.  We prove that the probability that the zero locus of a real holomorphic section $s$ of $L^{\otimes d}$ defines a maximal hypersurface tends to $0$ exponentially fast as $d$ goes to infinity. This extends to any dimension a result of  Gayet and  Welschinger \cite{gwexp} valid for maximal real algebraic curves inside a real algebraic surface.

The starting point is a low degree approximation property which relates the topology of the real  vanishing locus of a real holomorphic section of $L^{\otimes d}$ with the topology of the real  vanishing locus  a real holomorphic section of $L^{\otimes d'}$ for a sufficiently smaller $d'<d$. Such a statement is inspired by the recent work of Diatta and Lerario \cite{diatta}.
\end{abstract}
\section{Introduction}
\subsection{Real algebraic varieties and maximal hypersurfaces}
Let $(X,c_X)$ be a real algebraic variety, that is a complex (smooth, projective and connected) algebraic variety equipped with an antiholomorphic involution $c_X:X\rightarrow X$, called the real structure. For example, the projective space $\C P^n$ equipped with the standard conjugaison $\textit{conj}$ is a real algebraic variety. More generally, the solutions of a system of homogeneous real polynomial equations in $n+1$ variables define a real algebraic variety $X$ inside $\C P^n$, whose real structure  is the restriction of $\textit{conj}$ to $X$.
The real locus $\R X$ of a real algebraic variety is the set  of fixed points of the real structure, that is $\R X=\mathrm{Fix}(c_X)$. The real locus $\R X$ is either empty, or a finite union of $n$ dimensional $\mathcal{C}^{\infty}$-manifolds, where $n$ is the (complex) dimension of $X$. The study of the topology of real algebraic varieties is a central topic in real algebraic geometry since the works of Harnack and Klein on the topology of real algebraic curves \cite{harnack,klein} and the famous Hilbert's sixteenth problem \cite{wilson}.
A fundamental restriction on the topology of the real locus of a real algebraic variety is given by the Smith-Thom inequality \cite{thomreelle} which asserts that the total Betti number of the real locus $\R X$ of a real algebraic variety is bounded from above by the total Betti number of the complex locus:  
\begin{equation}\label{smith-thom}
\sum_{i=0}^n\dim H_i(\R X, \mathbb{Z}/2)\leq \sum_{i=0}^{2n}\dim H_i(X, \mathbb{Z}/2).
\end{equation}
We will more compactly write $b_*(\R X)\leq b_*(X)$, where $b_*$ denotes the total Betti number (i.e. the sum of the $\mathbb{Z}/2$-Betti numbers). 
For a real algebraic curve $C$, the inequality \eqref{smith-thom} is known as Harnack's inequality \cite{harnack,klein} and reads $b_0(\R C)\leq g+1$, where $g$ is the  genus of $C$. 
A real algebraic variety which realizes the equality in \eqref{smith-thom} is called  \emph{maximal}. 

In this paper, we will study  maximal real algebraic hypersurfaces inside a fixed real algebraic variety $X$. More precisely, we are interested in the following question: given a real linear system of divisors in $X$, what is the probability to find a divisor defining a  real algebraic maximal hypersurface? The goal of the paper is to show that real algebraic maximal hypersurfaces are exponentially rare inside their linear system.
\subsection{Real Hermitian line bundles and Gaussian measures}\label{secgaussian} In order to answer the previous question, let  $\pi:(L,c_L)\rightarrow (X,c_X)$ be an ample real holomorphic line bundle over $X$, that is, an ample holomorphic line bundle $L$ over $X$ equipped with a real structure $c_L$ such that $\pi\circ c_L=c_X\circ \pi$. We equip $L$ with a smooth real Hermitian metric $h$  of positive curvature $\omega$ (we recall that \textit{real} means $c_L^*h=\bar{h}$). We denote by $\R H^0(X,L^d)$ the space of real global sections of $L^{\otimes d}=L^d$, that is the space of holomorphic sections $s\in H^0(X,L^d)$ such that $s\circ c_X=c_{L^d}\circ s$. This space is naturally equipped with a   $\mathcal{L}^2$-scalar product defined by
\begin{equation}\label{l2 scalar}
\langle s_1,s_2\rangle_{\mathcal{L}^2}=\int_X h^d(s_1,s_2)\frac{\omega^{\wedge n}}{n!}
\end{equation}
for any pair of real global sections $s_1,s_2\in\R H^0(X,L^d)$, where $h^d$ is the real Hermitian metric on $L^d$ induced by $h$.
In turn, the $\mathcal{L}^2$-scalar product \eqref{l2 scalar} naturally induces   a Gaussian probability measure $\mu_d$ defined by
\begin{equation}\label{gaussian measure}
\mu_d(A)=\frac{1}{\sqrt{\pi}^{N_d}}\int_{s\in A}e^{-\norm{s}^2_{\mathcal{L}^2}}\mathrm{d}s
\end{equation}
for any open set $A\subset \R H^0(X,L^d)$, where $N_d$ is the dimension of $\R H^0(X,L^d)$ and $\mathrm{d}s$ the Lebesgue measure induced by the $\mathcal{L}^2$-scalar product \eqref{l2 scalar}. 
The probability space we will consider is then $\big(\R H^0(X,L^d),\mu_d\big)$.
\begin{example}\label{Kost} When $(X,c_X)$ is the $n$-dimensional projective space and $(L,c_L,h)$ is the degree $1$ real holomorphic line bundle  equipped with the standard Fubini-Study metric, then the vector space $\R H^0(X,L^{d})$ is isomorphic to the space $\R_d^{hom}[X_0,\dots,X_n]$ of degree $d$ homogeneous real polynomials in $n+1$ variables and the $\mathcal{L}^2$-scalar product is the one which makes the family of monomials $\big\{\sqrt{\binom{(n+d)!}{n!\alpha_0!\cdots\alpha_n!}}X_0^{\alpha_0}\cdots X_n^{\alpha_n}\big\}_{\alpha_0+\dots+\alpha_n=d}$ an orthonormal basis. A random polynomial with respect to the Gaussian probability measure induced by this scalar product is called a Kostlan polynomial.
\end{example}
\subsection{Statement of the main results}
The zero locus of a real global section $s_d$ of $L^d$ is denoted by $Z_{s_d}$ and its real locus by $\R Z_{s_d}$. The total Betti number $b_*(Z_{s_d})$ of $Z_{s_d}$ verifies the asymptotic $b_*(Z_{s_d})= v(L)d^n+O(d^{n-1}),$ as $d\rightarrow \infty$, where $v(L):=\int_{X}c_1(L)^n$ is the so-called volume of the line bundle $L$ (see, for example, \cite[Lemma 3]{gw1}).  
If the total Betti number $b_*(\R Z_{s_d})$ of the real locus of a sequence of real algebraic hypersurfaces verifies  the same asymptotics, then the hypersurfaces are called \emph{asymptotically maximal}. The existence of asymptotically maximal hypersurfaces is known in many cases, for example for real algebraic surfaces \cite[Theorem 5]{gwexp}, for  projective spaces \cite{itenbergviro} and for toric varieties \cite[Theorem 1.3]{bertrand}. 
 The first main result of the paper shows that asymptotically maximal hypersurfaces are very rare in their linear system. More precisely:
\begin{thm}\label{theorem rarefaction} Let $(X,c_X)$ be a real algebraic variety of dimension $n$ and $(L,c_L)$ be a  real  Hermitian line bundle  of positive curvature.
Then, there exists a positive $a_0<v(L)$, such that, for any $a> a_0$, we have 
$$\mu_d\big\{s\in\R H^0(X,L^d), b_*(\R Z_s)\geq  ad^n\big\}\leq O(d^{-\infty})$$
as $d\rightarrow\infty$.
\end{thm}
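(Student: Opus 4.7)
The plan is to combine a low-degree approximation property with the Smith-Thom bound applied in a smaller degree, plus a Gaussian concentration estimate to handle the ``bad event''. Fix once and for all a small constant $c\in(0,1)$ to be chosen later, and set $d'=\lfloor cd\rfloor$. The first step is to decompose a random $s_d\in\R H^0(X,L^d)$ as $s_d=\tilde s_d+r_d$, where $\tilde s_d$ is produced from a section of $L^{d'}$ (for instance as the image of $\R H^0(X,L^{d'})$ under multiplication by a fixed reference real section of $L^{d-d'}$, or equivalently by projection onto a subspace built from a truncated Bergman kernel). The main geometric claim, which is the low-degree approximation alluded to in the abstract following Diatta-Lerario, is that on the event that $r_d$ is dominated in $C^1$-norm along $\R X$ by $\tilde s_d$, Thom's isotopy lemma applied to the family $\tilde s_d+tr_d$ with $t\in[0,1]$ exhibits an ambient isotopy between $(\R X,\R Z_{\tilde s_d})$ and $(\R X,\R Z_{s_d})$, and in particular $b_*(\R Z_{s_d})=b_*(\R Z_{\tilde s_d})$.

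On this good event, the Smith-Thom inequality applied at degree $d'$ and the complex asymptotic $b_*(Z_{\tilde s_d})=v(L)(d')^n+O((d')^{n-1})$ recalled in the introduction yield
\[
b_*(\R Z_{s_d})\leq c^n v(L)\,d^n + O(d^{n-1}).
\]
Setting $a_0=c^n v(L)$, we have $a_0<v(L)$ precisely because $c<1$, and for any $a>a_0$ the event $\{b_*(\R Z_s)\geq ad^n\}$ is forced into the complementary ``bad'' event where the $C^1$-domination fails somewhere on $\R X$.

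What remains is to prove that this bad event has $\mu_d$-probability $O(d^{-\infty})$. At a fixed point $x\in\R X$, the value and first jet of $r_d$ are real Gaussians whose covariance is controlled by the off-diagonal decay of the truncated Bergman kernel; super-polynomial Gaussian tail bounds then give the correct pointwise decay. One upgrades this to a uniform-in-$\R X$ statement via a union bound over a net of polynomial size combined with a Bernstein-Markov inequality, which converts pointwise bounds into the required $C^1$ uniform control on sections of $L^d$.

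The main obstacle, to my mind, is the quantitative calibration of scales: $\tilde s_d$ varies on scale $1/\sqrt{cd}$, so $r_d$ needs to be controlled on the same scale for Thom's isotopy lemma to apply; simultaneously matching this with the tail gain from Gaussian concentration, without losing the $O(d^{-\infty})$ rate against a polynomial-sized union bound, is the technical heart of the proof, and is precisely where the Diatta-Lerario-inspired approximation must be made quantitative.
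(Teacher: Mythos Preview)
Your high-level strategy---low-degree approximation plus Smith--Thom at the lower degree---is exactly the paper's, but the mechanism you propose for the approximation has a genuine gap, and the probabilistic step is misidentified. The critical missing idea concerns the reference section. You write $\tilde s_d$ as a low-degree $s'\in\R H^0(X,L^{d'})$ times ``a fixed reference real section of $L^{d-d'}$''; but for Smith--Thom at degree $d'$ to say anything, you need $\R Z_{\tilde s_d}=\R Z_{s'}$, which forces the reference section to have \emph{empty real vanishing locus}. A generic real section of a positive line bundle does not have this property, and your sentence ``$b_*(Z_{\tilde s_d})=v(L)(d')^n+O((d')^{n-1})$'' is false as written ($\tilde s_d$ is a section of $L^d$, so its complex Betti number is $\sim v(L)d^n$). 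The paper constructs a single $\sigma\in\R H^0(X,L^k)$ with $\R Z_\sigma=\emptyset$ (a small perturbation of $\sum_i s_i^{\otimes 2}$ over a basis), and then uses its tensor powers $\sigma^{\lfloor td\rfloor}$; the subspace one projects onto is $\R H_{d,\sigma^{\lfloor td\rfloor}}=\sigma^{\lfloor td\rfloor}\otimes\R H^0(X,L^{d-k\lfloor td\rfloor})$.

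The second point is that the smallness of the remainder on $\R X$ is not a probabilistic event at all: it is \emph{deterministic}. Because $Z_\sigma$ is a complex hypersurface disjoint from $\R X$, the partial Bergman kernel for sections vanishing to order $\lfloor td\rfloor$ along $Z_\sigma$ agrees with the full Bergman kernel to $O(d^{-\infty})$ uniformly on $\R X$; hence every unit-norm $s$ has $\|s^\perp\|_{\mathcal C^1(\R X)}=O(d^{-\infty})$. Your proposed Gaussian-tail-plus-net argument, and the ``quantitative calibration of scales'' you flag as the main obstacle, are therefore aimed at the wrong target. Probability enters only to ensure $s$ is far enough from the discriminant $\R\Delta_d$ for the isotopy to go through, and this is handled by bounding the $\mu_d$-volume of a thin tubular cone around $\R\Delta_d$: one shows $\R\Delta_d$ is contained in a hypersurface of degree $O(d^n)$, so a tube of radius $r_d$ has measure $O(r_d d^{2n})$, which is $O(d^{-\infty})$ for any polynomially small $r_d$.
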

The notation $O(d^{-\infty})$ stands for $O(d^{-k})$ for any $k\in\mathbb{N}$ and the measure $\mu_d$ is the Gaussian measure defined in Equation \eqref{gaussian measure}. 
Note that actually we have more than "asymptotically maximal hypersurfaces are very rare". Indeed, asymptotically maximal hypersurfaces corresponds to the asymptotics $b_*(\R Z_s)=v(L)d^n+O(d^{n-1})$, while in Theorem \ref{theorem rarefaction} we consider bigger subsets of $\R H^0(X,L^d)$  of the form   $b_*(\R Z_s)\geq ad^n$, for   $v(L)>a> a_0$.

For maximal real algebraic hypersurfaces, the rarefaction is even exponential. 
More precisely:
\begin{thm}\label{theorem exponential rarefaction} Let $(X,c_X)$ be a real algebraic variety of dimension $n$ and $(L,c_L)$ be a  real  Hermitian line bundle  of positive curvature.
For any $a>0$  there exists $c>0$ such that 
$$\mu_d\big\{s\in\R H^0(X,L^d), b_*(\R Z_s)\geq b_*( Z_s)-ad^{n-1}\big\}\leq O(e^{-c\sqrt{d}\log d})$$
as $d\rightarrow\infty$.
Moreover, if the real Hermitian metric on $L$ is  analytic,  one has the estimate
$$\mu_d\big\{s\in\R H^0(X,L^d), b_*(\R Z_s)\geq b_*( Z_s)-ad^{n-1}\big\}\leq O(e^{-cd}).$$
\end{thm}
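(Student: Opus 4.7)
The plan is to combine the low-degree approximation announced in the abstract with a Gaussian tail estimate in a carefully chosen orthogonal complement, extending the two-dimensional argument of Gayet--Welschinger~\cite{gwexp}. For a parameter $d'=d'(d)<d$ to be chosen, fix an auxiliary real section $\sigma_\star\in\R H^0(X,L)$ whose real zero locus is smooth, and set $\sigma_0=\sigma_\star^{d-d'}$. Multiplication by $\sigma_0$ identifies $\R H^0(X,L^{d'})$ with a linear subspace $V_d\subset \R H^0(X,L^d)$, so that every section splits $\mathcal{L}^2$-orthogonally as
\begin{equation*}
s=p\,\sigma_0+\tau,\qquad p\in\R H^0(X,L^{d'}),\quad \tau\in V_d^{\perp}.
\end{equation*}
The low-degree approximation (the main geometric tool of the paper) furnishes a threshold $T_d>0$ such that, conditional on an event in $p$ of $\mu_{d'}$-probability $1-O(d^{-\infty})$, one has $b_*(\R Z_s)\le b_*(\R Z_{p\sigma_0})$ whenever $\|\tau\|_{\mathcal{C}^1}\le T_d$.

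Granting this, a purely numerical reduction takes care of the geometric side. By the Smith--Thom inequality~\eqref{smith-thom} applied to the reducible complex divisor $Z_{p\sigma_0}=Z_p\cup Z_{\sigma_0}$, combined with the asymptotic $b_*(Z_{s_e})=v(L)e^n+O(e^{n-1})$, one obtains
\begin{equation*}
b_*(\R Z_s)\;\le\; v(L)\bigl((d')^n+(d-d')^n\bigr)+O(d^{n-1}).
\end{equation*}
Using the elementary convexity bound $(d')^n+(d-d')^n\le d^n-n d^{n-1}d'+O(d^{n-2}(d')^2)$ and choosing $d'$ to be a sufficiently large constant depending on $a$ and $v(L)$, the right-hand side is strictly smaller than $b_*(Z_s)-ad^{n-1}$ for all $d$ large. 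Hence, off the exceptional event in $p$, the event $\{b_*(\R Z_s)\ge b_*(Z_s)-ad^{n-1}\}$ is contained in $\{\|\tau\|_{\mathcal{C}^1}>T_d\}$.

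The remaining step is a Gaussian tail bound. Since $\tau$ is a standard Gaussian vector in the finite-dimensional Euclidean space $V_d^{\perp}$, and the $\mathcal{C}^1$-norm can be controlled by the $\mathcal{L}^2$-norm via a Bergman-kernel Sobolev inequality of the form $\|\tau\|_{\mathcal{C}^1}\le \Lambda_d\,\|\tau\|_{\mathcal{L}^2}$, one gets
\begin{equation*}
\mu_d\bigl(\|\tau\|_{\mathcal{C}^1}>T_d\bigr)\;\le\;\exp\!\Bigl(-c\,T_d^{\,2}/\Lambda_d^{\,2}\Bigr)
\end{equation*}
for $d$ large. In the smooth setting, $\Lambda_d$ is polynomial in $d$ and $T_d$ may be taken to be a positive power of $d$, which after optimization in $d'$ yields the announced $e^{-c\sqrt{d}\log d}$ rate; in the real-analytic setting, Cauchy-type estimates replace $\Lambda_d$ by a constant independent of $d$ and allow $T_d$ to stay of constant order, producing the sharper $e^{-cd}$ bound. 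The main obstacle is the low-degree approximation itself: one must show that a $\mathcal{C}^1$-small perturbation $\tau$ of the reducible section $p\sigma_0$ does not increase its real total Betti number, uniformly in $p$ over a set of overwhelming Gaussian measure. The crucial input is a quantitative transversality estimate for $p\sigma_0$ on a neighborhood of its real zero locus, in the spirit of Diatta--Lerario~\cite{diatta}; the rate of non-degeneracy available there (polynomial in the smooth case, exponential in the analytic case) is precisely what fixes the final probability exponent.
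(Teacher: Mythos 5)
Your decomposition $s = p\sigma_0 + \tau$ with $p\sigma_0$ in the image of multiplication by a power of a fixed low-degree real section is formally the same device the paper uses (Notation \ref{orthogonal component}, Proposition \ref{vanishing order}), but the choice of auxiliary section and the downstream bookkeeping contain several genuine errors.

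First, and most importantly, the auxiliary section must have \emph{empty} real zero locus, not merely a smooth one. Proposition \ref{existance of sigma} produces $\sigma\in\R H^0(X,L^k)$ with $k$ even and $\R Z_\sigma=\emptyset$ (such a section need not exist in $\R H^0(X,L)$ at all, which is why the paper goes up to an even power). With $\R Z_{\sigma_\star}\neq\emptyset$ the section $p\sigma_0=p\sigma_\star^{d-d'}$ vanishes non-transversally along $\R Z_{\sigma_\star}\subset\R X$, i.e.\ $p\sigma_0\in\R\Delta_d$. Thom's isotopy lemma is therefore unavailable: you cannot conclude any stability of the real zero locus under $\mathcal{C}^1$-small perturbation $\tau$, and the claimed inequality $b_*(\R Z_s)\le b_*(\R Z_{p\sigma_0})$ is simply false as stated (generic perturbation of a non-reduced real divisor \emph{increases} the real Betti number by smoothing it out). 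The paper's whole point is that since $Z_\sigma\cap\R X=\emptyset$, multiplying $s'$ by $\sigma$ does not create any real degeneracy: $\R Z_{\sigma\otimes s'}=\R Z_{s'}$, so the isotopy argument can be carried out relative to the genuine discriminant $\R\Delta_d$ via Lemmas \ref{distance to the discriminant}, \ref{isotopy} and \ref{rapiddecay}.

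Second, the Smith--Thom step is miscounted. You treat $Z_{\sigma_0}$ as a degree-$(d-d')$ hypersurface with $b_*(Z_{\sigma_0})\approx v(L)(d-d')^n$, but $\sigma_0=\sigma_\star^{\,d-d'}$ has the same zero set as $\sigma_\star$, a fixed degree-one hypersurface, so $b_*(Z_{\sigma_0})=O(1)$; the displayed bound $b_*(\R Z_s)\le v(L)((d')^n+(d-d')^n)+O(d^{n-1})$ and the subsequent convexity estimate do not follow. What the paper actually does is compare the \emph{second-order} terms of the asymptotics $b_*(Z_s)=v(L)d^n-\big(\int_X c_1(L)^{n-1}\wedge c_1(X)\big)d^{n-1}+O(d^{n-2})$ for a smooth hypersurface of degree $d$ and the analogous expansion for a smooth hypersurface of degree $d-k$ (equations \eqref{tsd}, \eqref{tsdk}), then chooses $k$ constant so that $nk\int_Xc_1(L)^n>a$. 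No reducible divisor is ever Betti-counted.

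Third, the mechanism producing the exponential rate is misidentified. It is not a Gaussian tail bound for $\|\tau\|_{\mathcal{L}^2}$ combined with a Sobolev constant $\Lambda_d$. Proposition \ref{logberg} shows that \emph{every} unit vector in $\R H^{\perp}_{d,\sigma}$ has $\mathcal{C}^1(\R X)$-norm $O(e^{-c\sqrt{d}\log d})$ (resp.\ $O(e^{-cd})$ in the analytic case): this is a deterministic consequence of the off-diagonal decay of the Bergman kernel, since $\R H^{\perp}_{d,\sigma}$ is spanned by sections concentrated near $Z_\sigma$, which is uniformly far from $\R X$. Consequently the event $\{\|s^{\perp}_\sigma\|_{\mathcal{C}^1(\R X)}\ge w_d\|s\|_{\mathcal{L}^2}\}$ has measure \emph{zero} for $w_d$ of that exponential size (Proposition \ref{estimatepartial}); the only probabilistic input is the volume of an exponentially thin conical neighborhood of $\R\Delta_d$ (Lemmas \ref{degree of discriminant}, \ref{volume of tubular neighborhood}). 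This is where the $e^{-c\sqrt d\log d}$ and $e^{-cd}$ exponents come from; your $T_d^2/\Lambda_d^2$ heuristic does not reproduce them.
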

Theorems \ref{theorem exponential rarefaction} extends to any dimension a result of  Gayet and Welschinger \cite{gwexp}, in which the authors prove, using the theory of laminary currents, that    maximal real curves are exponentially rare in a real algebraic surface.
We stress that our techniques are different from those of \cite{gwexp}.
\begin{oss}\label{fubinistudy} The $\mathcal{L}^2$-scalar product on $\R H^0(X,L^d)$ also induces a Fubini-Study volume on the linear system $P(\R H^0(X,L^d))$. Both sets considered in Theorems \ref{theorem rarefaction} and \ref{theorem exponential rarefaction} are cones in $\R H^0(X,L^d)$ and the volume (with respect to the Fubini-Study form) of their projectivization   coincides with the Gaussian measures estimated in Theorems \ref{theorem rarefaction} and \ref{theorem exponential rarefaction}.
\end{oss}
Theorems \ref{theorem rarefaction} and \ref{theorem exponential rarefaction}  are consequence of a  more general "approximation theorem" which states that, for some $b<1$, with very high probability, the zero locus of a real section of $L^d$ is diffeomorphic to the zero locus of a real section of $L^{\lfloor bd \rfloor}$, where $\lfloor bd \rfloor$ is the greatest integer less than or equal to $bd$. More precisely we have:
\begin{thm}\label{theorem approximation} Let $(X,c_X)$ be a real algebraic variety  and $(L,c_L)$ be a  real Hermitian line bundle  of positive curvature. 
\begin{enumerate}
\item There exists a positive $b_0<1$ such that for any $b_0< b <1$ the following happens:  the probability that,  for a real  section $s$ of $L^d$, there exists a real section $s'$ of $L^{\lfloor bd \rfloor}$ such that the pairs $(\R X,\R Z_s)$ and $(\R X,\R Z_{s'})$ are isotopic,  is at least $1-O(d^{-\infty})$, as $d\rightarrow\infty$.
\item  For any $k\in \mathbb{N}$ there exists $c>0$ such that the following happens: the probability that, for a real section $s$ of $L^d$, there exists a real section $s'$ of $L^{d-k}$ such that the pairs $(\R X,\R Z_s)$ and $(\R X,\R Z_{s'})$ are isotopic,  is at least $1-O(e^{-c\sqrt{d}\log d})$, as $d\rightarrow\infty$. If moreover the real Hermitian metric on $L$ is  analytic, this probability is  at least $1-O(e^{-cd})$, as $d\rightarrow\infty$
\end{enumerate}
\end{thm}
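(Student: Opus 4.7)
My approach combines a deterministic quantitative isotopy criterion for zero loci with Gaussian concentration estimates for the error in an $\mathcal{L}^2$-orthogonal ``low-degree decomposition'' of a random section of $L^d$. Both ingredients rest on the asymptotic Bergman kernel expansion of Ma--Marinescu and Tian--Zelditch.

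\textbf{Step 1 (deterministic stability).} I would first prove a quantitative isotopy stability theorem of the following shape: there exists $C>0$ such that if $s\in\R H^0(X,L^d)$ is \emph{$\tau_d$-quantitatively transverse}, meaning
\[
\inf_{x\in\R X}\bigl(|s(x)|_{h^d}+d^{-1/2}|\nabla s(x)|_{h^d}\bigr)\geq \tau_d,
\]
and $r\in\R H^0(X,L^d)$ satisfies $\sup_{\R X}\bigl(|r|_{h^d}+d^{-1/2}|\nabla r|_{h^d}\bigr)\leq \tau_d/C$, then $(\R X,\R Z_s)$ and $(\R X,\R Z_{s+r})$ are isotopic pairs. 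The proof covers $\R X$ by K\"ahler-normal charts of radius $\asymp d^{-1/2}$---the natural wavelength at degree $d$---and applies a uniform implicit-function argument in each chart, with the local isotopies glued on overlaps using the transversality lower bound.

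\textbf{Step 2 (low-degree decomposition and concentration).} Put $d'=\lfloor bd\rfloor$ in part (1) and $d'=d-k$ in part (2), and fix a real section $\sigma\in\R H^0(X,L^{d-d'})$. Decompose orthogonally $s=\sigma\cdot s'+r$ with $s'\in\R H^0(X,L^{d'})$ and $r$ in the orthogonal complement of $\sigma\cdot\R H^0(X,L^{d'})\subset\R H^0(X,L^d)$. This complement has dimension $N_d-N_{d'}$, of order $(1-b^n)d^n$ in part (1) and $knd^{n-1}$ in part (2); by the on-diagonal Bergman kernel expansion the pointwise variance of $r$ is bounded by a constant times this dimension. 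A $d^{-1/2}$-net on $\R X$ combined with standard Gaussian tail bounds then yields the claimed probability estimates for the smallness hypothesis of Step 1: polynomial decay $O(d^{-\infty})$ after choosing $b$ close enough to $1$ in part (1), sub-exponential decay $O(e^{-c\sqrt{d}\log d})$ in part (2), with the improvement to $O(e^{-cd})$ in the real-analytic case coming from Bernstein-type bounds on the Gaussian coefficients of analytic sections. A parallel Kac--Rice estimate, fed by off-diagonal Bergman kernel decay, ensures that $\sigma\cdot s'$ is $\tau_d$-quantitatively transverse with at least the same probability, so Step 1 yields the isotopy $\R Z_s\sim \R Z_{\sigma\cdot s'}=\R Z_\sigma\cup\R Z_{s'}$.

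\textbf{Main obstacle.} The central difficulty is to pass from $\R Z_{\sigma\cdot s'}$ to $\R Z_{s'}$, since a generic $\sigma$ does vanish on a real hypersurface of $\R X$ and so introduces spurious components in the zero locus. I propose to replace the single $\sigma$ by a finite family $(\sigma_i)_{i=1}^N\subset\R H^0(X,L^{d-d'})$ with empty common real base locus (guaranteed by ampleness for $d-d'$ large, and obtained by multiplying by a suitable fixed power for fixed $d-d'=k$), to perform the orthogonal decomposition locally over the cover $U_i=\{\sigma_i\neq 0\}$ of $\R X$, and to patch the resulting local isotopies by a partition of unity; a union bound over the finite $N$ absorbs only a multiplicative constant in the failure probability. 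The technical core then becomes a \v{C}ech-type comparison on the overlaps showing that the local projections $s'_i$ determine a single global $s'\in\R H^0(X,L^{d'})$ whose real zero locus is isotopic to $\R Z_s$---this gluing, carried out uniformly under the simultaneous residual smallness and transversality estimates, is where I expect the bulk of the technical work to lie.
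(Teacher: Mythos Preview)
Your ``main obstacle'' is not an obstacle at all, and the elaborate \v{C}ech-patching workaround you propose is both unnecessary and unlikely to work. The paper's key observation---which you miss---is that one can choose a \emph{single} $\sigma\in\R H^0(X,L^k)$ with $\R Z_\sigma=\emptyset$: take any basis $s_1,\dots,s_N$ of $\R H^0(X,L^m)$ for $m$ large enough that $L^m$ is very ample, set $\sigma_0=\sum_i s_i^{\otimes 2}\in\R H^0(X,L^{2m})$, and perturb slightly to make the vanishing transversal. Since $\sigma_0$ is a sum of squares of real sections with no common zero on $\R X$, its real locus is empty, and this persists under small perturbation. With such a $\sigma$, the passage from $\R Z_{\sigma\cdot s'}$ to $\R Z_{s'}$ is trivial: they are equal.

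This single construction also changes the nature of the smallness estimate for $r$. Because $Z_\sigma$ is disjoint from the fixed compact $\R X$, the partial (or logarithmic) Bergman kernel of the orthogonal complement is uniformly $O(d^{-\infty})$ (respectively $O(e^{-c\sqrt d\log d})$, or $O(e^{-cd})$ in the analytic case) on $\R X$. Hence $\|r\|_{\mathcal C^1(\R X)}$ is small \emph{deterministically} for every $s$ of unit $\mathcal L^2$-norm, not merely with high probability. The only probabilistic input is the estimate on the distance to the discriminant. Your claim that ``the pointwise variance of $r$ is bounded by a constant times the dimension of the complement'' is false whenever $\sigma$ has real zeros: near $\R Z_\sigma$ the subspace $\sigma\cdot\R H^0(X,L^{d'})$ has vanishing pointwise norm, so $P_d^\perp(x)\approx P_d(x)\asymp d^n$ there, and no Gaussian tail bound will make $r$ small. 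This is precisely why your multi-$\sigma_i$ scheme breaks down near the boundary of each $U_i$, and why the local projections $s'_i$---which are projections onto genuinely different subspaces of $\R H^0(X,L^{d'})$---have no reason to agree on overlaps.
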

Assertion \textit{(1)} (resp. assertion \textit{(2)}) of Theorem \ref{theorem approximation}, together with the Smith-Thom inequality \eqref{smith-thom} and of the asymptotics $b_*(Z_{s_d})= v(L)d^n+O(d^{n-1}),$ for $s_d\in \R H^0(X,L^d)$, will imply Theorem \ref{theorem rarefaction} (resp. Theorem \ref{theorem exponential rarefaction}).  
It is also worth pointing out that  the assertions \textit{(1)} and \textit{(2)} of Theorem \ref{theorem approximation} are independent of each other, although their proofs, which will be sketched in Section \ref{SecProof}, are similar.\
 \begin{oss}  Theorem \ref{theorem approximation} implies not only that maximal hypersurfaces are rare, but that "maximal configurations" are.  For instance, we will show in Section \ref{secnest} that in some suitable real algebraic surfaces the probability that a real algebraic curve has a deep nest of ovals is exponentially small (roughly speaking a nest of ovals means severals ovals one inside the other), see Theorems \ref{hirzethm} and \ref{delpezzothm}.
 \end{oss}
 
When $(X,c_X)=(\mathbb{C}P^n,\conj)$, $L=\mathcal{O}(1)$ and the Hermitian metric on $L$ equals the Fubini-Study metric (that is for the case of the so-called Kostlan polynomials, see Example \ref{Kost}), a low degree approximation property was recently proved by   Diatta and  Lerario \cite{diatta} (see also \cite{breiding}), so that Theorem \ref{theorem approximation} is a natural generalization of their result. Actually, in \cite{diatta} the authors proves a general low degree approximation property for Kostlan polynomials: for instance, they have proved that a degree $d$ Kostlan polynomial can be approximated (in the sense of Theorem \ref{theorem approximation}) by a degree $b\sqrt{d\log d}$ Kostlan polynomial, $b>0$, with probability $1-O(d^{-a})$, with $a>0$ depending on  $b$.  

We stress that in \cite{diatta}, it is essential that the considered real algebraic variety is the projective space and that the  metric on $\mathcal{O}(1)$ is the Fubini-Study one (and not, for instance, a small perturbation of it). Indeed, in this situation: 
\begin{enumerate}
\item the induced $\mathcal{L}^2$-scalar product on $\R_d^{hom}[X_0,\dots,X_n]$ is invariant  under the action of the orthogonal group $O(n+1)$, which acts on the variables $X_0,\dots,X_n$ (equivalently, the group $O(n+1)$ acts by real holomorphic isometries on $(\C P^n,\conj)$);
\item there exists a canonical $O(n+1)$-invariant decomposition  $\R_d^{hom}[X_0,\dots,X_n]=\bigoplus_{d-\ell\in 2 \mathbb{N}}V_{d,\ell}$, where $V_{d,\ell}$ is the space of  homogeneous harmonic polynomials  of degree $\ell$, thanks to which it is possible to define projections of degree $d$ polynomials into  lower degree ones. 
\end{enumerate}
 These two properties, together with the classification \cite{kostscalarproduct} of  the $O(n+1)$-invariant scalar products on $\R_d^{hom}[X_0,\dots,X_n]$, are fundamental for the proof of the results in \cite{diatta}. This  is a very special feature of Kostlan polynomials  and the reason why in our general case some of the approximations of \cite{diatta} cannot be obtained.  Indeed,
  \begin{enumerate}
\item on a general real algebraic variety equipped with a K\"ahler metric $\omega$ the group of holomorphic isometries is trivial;
\item given a real Hermitian holomorphic line bundle $L\rightarrow X$ there is no canonical decomposition of $\R H^0(X,L^d)$.
\end{enumerate}
Hence, in order to obtain an approximation property for sections of line bundles on a general real algebraic variety,  we have to use a different strategy, that we will explain in more details in  Section \ref{Comments on proof}. In particular, in our proof, in constrast to the case of polynomials \cite{diatta}, the complex locus of the variety $X$ (and not only the real one) plays a fundamental role. Indeed, we will consider real subvarieties of $X$ with empty real loci and we will study the real sections of $L^d$ that vanish along these subvarieties. These real sections are the fundamental tool for our low degree approximation property.

 During the proof of Theorem \ref{theorem approximation}, we will also need to understand how much we can perturb a real section $s\in\R H^0(X,L^d)$ without changing the topology of its real locus. This leads us to  study two quantities  related to the discriminant $\R\Delta_d\subset\R H^0(X,L^d)$, that is  the subset of sections which do not vanish transversally along $\R X$. 
 More precisely, we will consider the volume (with respect to the Gaussian measure $\mu_d$) of  tubular  neighborhoods of $\R\Delta_d$ and the function "distance to the discriminant". 
Such quantities have already been observed to be important in the case of Kostlan polynomials in \cite[Section 4]{diatta}, however in our general framework the lack of symmetries makes the computation of those  more delicate and requires the use of the Bergman kernel's estimates along the diagonal \cite{ber1,zel,ma2}.

In particular, denoting by $\mathrm{dist}_{\R\Delta_d}(s)$ the distance (induced by the $\mathcal{L}^2$-scalar product \eqref{l2 scalar}) from a section $s$ to the discriminant $\R\Delta_d$, we obtain the uniform estimate (see Lemma \ref{distance to the discriminant}):
 $$\mathrm{dist}_{\R\Delta_d}(s)=\min_{x\in\R X}\bigg(\frac{\norm{s(x)}^2_{h^d}}{d^n}+\frac{\norm{\nabla s(x)}^2_{h^d}}{d^{n+1}}\bigg)^{1/2}\big(\pi^{n/2}+O(1/d)\big)
$$
as $d\rightarrow\infty$. We believe  that this formula has an independent interest.\\

Finally, let us conclude this section by pointing out that, while in the present paper we are interested in some rare events  (that is, real algebraic hypersurfaces with rich topology),  the expected value $\mathbb{E}[b_i(\R Z_{s_d})]$ of the $i$-th  Betti number of $\R Z_{s_d}$ is bounded from below and from above by respectively $c_{i,n}d^{n/2}$ and $C_{i,n}d^{n/2}$, for some positive constants $0<c_{i,n}\leq C_{i,n}$, see \cite{gw2,gwlowerbound}. Moreover, in the case of the $0$-th Betti number (that is, for the number of connected components), it is known that $\lim_{d\rightarrow\infty}\frac{1}{d^{n/2}}\mathbb{E}[b_0(\R Z_{s_d})]$ exists and is positive \cite{nazarovsodinasymptotic}. 
\subsection{Idea of the proof of Theorem \ref{theorem approximation}}\label{Comments on proof}
In this section, we  sketch the proof of Assertion \textit{(2)}  of Theorem \ref{theorem approximation}, the proof of Theorem \ref{theorem approximation} \textit{(1)} is  similar.  

We want to prove that, with very high probability, the real vanishing locus of a real section $s$ of $L^d$ is ambient  isotopic to the real vanishing locus of a real section $s'$ of $L^{d-k}$. 

 The first fact  we will use  is the existence of a real  section $\sigma$ of $L^{k}$, for some suitable even $k\in 2\mathbb{N}$ large enough, with the properties that $\sigma$ vanishes transversally and $\R Z_{\sigma}=\emptyset$ (see Proposition \ref{existance of sigma}). In order to obtain such section $\sigma$, we consider an integer $m$  such that $L^m$ is very ample and a basis $\{s_1,\dots,s_N\}$ is a basis of $\R H^0(X,L^m)$. Then, $\sigma$ is any general small perturbation  of the  section $\sum_{i=1}^{N}s_i^{\otimes 2}$ of $L^{2m}$. 

Let us define $\R H_{d,\sigma}$ to be the vector space of real global sections $s\in\R H^0(X,L^d)$ such that $s$ vanishes along the vanishing locus $Z_\sigma$ of $\sigma$.
We also denote by $\R H^{\perp}_{d,\sigma}$ the orthogonal complement of $\R H_{d,\sigma}$  with respect to the $\mathcal{L}^2$-scalar product defined in Equation \eqref{l2 scalar}.
 Then, for any section $s\in\R H^0(X,L^d)$, there exists an unique decomposition $s=s_{\sigma}^{\perp}+s_{\sigma}^{0}$ with $s_{\sigma}^{0}\in\R H_{d,\sigma}$ and $s_{\sigma}^{\perp}\in\R H^{\perp}_{d,\sigma}$. 
 
 The fundamental point, that we will prove in Section \ref{secbergmankernels} using the theory of logarithmic and partial Bergman kernels \cite{partialbergmankernel,logbergman}, is that  the "orthogonal component" $s_{\sigma}^{\perp}$ of $s$ has a very small $\mathcal{C}^1$-norm along the real locus $\R X$.
A  geometric reason why this is true is that the space $\R H^{\perp}_{d,\sigma}$ is generated by the peak sections \cite{tian} which have a peak on $Z_\sigma$ and,  as $Z_\sigma\cap \R X=\emptyset$,  the pointwise $\mathcal{C}^1$-norm of these peak sections is very small along $\R X$ (indeed, a peak section has a very small $\mathcal{C}^1$-norm on any compact disjoint from its peak).

From the fact that $s_{\sigma}^{\perp}$ has a very small $\mathcal{C}^1$-norm along the real locus $\R X$, we deduce that $s$ is a "small pertubation" of $s_{\sigma}^0$: Thom's Isotopy Lemma would therefore imply that the pairs $(\R X,\R Z_{s})$ and $(\R X,\R Z_{s_{\sigma}^{0}})$ are isotopic \textit{if} $s_{\sigma}^0$ has a big enough $\mathcal{C}^1$-norm along $\R X$. This last implication can be translated in terms of distance from $s_{\sigma}^0$ to the discriminant $\R\Delta_d$, that is the space of real sections which do not vanish transversally along $\R X$: if $s_{\sigma}^0$ is far enough away from the discriminant, then the  pairs $(\R X,\R Z_{s})$ and $(\R X,\R Z_{s_{\sigma}^{0}})$ are isotopic. 

Using the Bergman kernel, we are able  to estimate the function "distance to the discriminant" (Lemma \ref{distance to the discriminant}). These estimates, together with an approach similar to \cite[Section 4]{diatta}, allows us to prove that, with very high probability, $s_{\sigma}^0$ is far enough away from the discriminant, so that,  with very high probability, the pairs $(\R X,\R Z_{s})$ and $(\R X,\R Z_{s_{\sigma}^{0}})$ are isotopic (Lemmas \ref{isotopy} and \ref{rapiddecay}).

   
 Finally, we prove that the section  $s_{\sigma}^{0}$ can be written as $\sigma\otimes s'$, for some real section $s'$ of $L^{d-k}$. In particular, as $\sigma$ does not have any real zero, we have the equality $\R Z_{s_{\sigma}^{0}}=\R Z_{s'}$, which proves Theorem \ref{theorem approximation}.
\subsection{Organization of the paper}
The paper is organized as follows. 

 In Section \ref{SecLowdegree}, we prove the existence of a real section $\sigma$ of $L^k$ with empty real vanishing locus and then we will study the real sections of $L^d$ which vanish along $Z_\sigma$. This leads us to consider logarithmic and partial Bergman kernels in Section \ref{secbergmankernels}. 
 
  In Section \ref{Secdiscr}, we study the geometry of the discriminant $\R\Delta_d\subset\R H^0(X,L^d)$ and compute several quantities related to it: its degree, the volume (with respect to the Gaussian measure $\mu_d$) of  tubular  neighborhoods and the function "distance to the discriminant".  

 In Section \ref{SecProof}, we prove our main results, namely Theorems \ref{theorem rarefaction}, \ref{theorem exponential rarefaction} and \ref{theorem approximation}.
 
 Finally, in Section \ref{secnest}, we study the depth of the nests of real algebraic curves inside some real algebraic surfaces, namely Hirzebruch surfaces (Theorem \ref{hirzethm}) and Del Pezzo surfaces (Theorem \ref{delpezzothm}).

\section{Sections vanishing along a fixed hypersurface}\label{SecLowdegree}
In this section, we prove the existence of a real global section $\sigma$ of $L^k$ with empty real vanishing locus and we  study the real sections of $L^d$ which vanish along $Z_\sigma$. 
\subsection{A global section with empty real vanishing locus} Let $(L,c_L)$ be an ample real holomorphic line bundle over a real algebraic variety $(X,c_X)$. A \textit{real section} of $L$ is a global holomorphic section $s$ of $L$ such that $s\circ c_X=c_L\circ s$.
\begin{prop}\label{existance of sigma} There exists an even positive integer $k_0$ such that for any even $k\geq k_0$ there exists a real  section $\sigma$ of $L^{k}$ with the following properties: (i)  $\sigma$ vanishes transversally and (ii) $\R Z_{\sigma}$ is empty.
\end{prop}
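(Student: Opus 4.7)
The plan is to construct $\sigma$ as a small perturbation of a canonical ``sum of squares'' section, following exactly the indication in the introduction. Choose $m_0$ large enough that $L^m$ is very ample (in particular base-point-free) for every $m \geq m_0$, and set $k_0 := 2m_0$. Fix now an even integer $k \geq k_0$; then $L^{k/2}$ is base-point-free. Pick any real basis $s_1,\dots,s_N$ of $\R H^0(X,L^{k/2})$ and consider
$$\tau_k := \sum_{i=1}^N s_i\otimes s_i \;\in\; H^0(X,L^k).$$
Since the tensor square of a real section is a real section of $L^k$, the section $\tau_k$ is real.

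The key point is that $\tau_k$ has no real zeros. To see this, fix $x\in\R X$ and choose a \emph{real} local trivialization $e$ of $L^{k/2}$ near $x$ (i.e.\ $c_L(e)=e$); such a trivialization exists because $c_L$ acts by an antiholomorphic involution fixing the fibre over $x$. In this trivialization each $s_i$ is represented by a holomorphic function $a_i$ whose restriction to $\R X$ takes real values (by the reality of $s_i$), and $\tau_k$ is represented by $\bigl(\sum_i a_i^2\bigr)\,e^{\otimes 2}$. Hence at the real point $x$ the value of $\tau_k(x)$ in the trivialization $e^{\otimes 2}$ is $\sum_i a_i(x)^2\geq 0$, with equality if and only if $s_i(x)=0$ for every $i$; this is impossible since $L^{k/2}$ is base-point-free. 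Therefore $\tau_k(x)\neq 0$ for every $x\in\R X$, i.e.\ $\R Z_{\tau_k}=\emptyset$.

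The section $\tau_k$ need not vanish transversally on the complex locus, but this is cured by a small generic perturbation. The subset of real sections of $L^k$ that do \emph{not} vanish transversally on $X$ is a proper real algebraic subvariety of $\R H^0(X,L^k)$: indeed, for $k\geq k_0$ the line bundle $L^k$ is very ample, so by Bertini applied to the complete linear system the complex discriminant is a proper closed subvariety of $|L^k|$, and its real points form a proper real subvariety of the real linear system. On the other hand, the condition $\R Z_s=\emptyset$ is open in $\R H^0(X,L^k)$ because $\R X$ is compact and $\tau_k$ is bounded away from zero on $\R X$. Intersecting this open neighbourhood of $\tau_k$ with the complement of the discriminant produces a real section $\sigma$ satisfying both (i) and (ii).

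The only delicate step is the density of transversal sections inside $\R H^0(X,L^k)$; this is where the very ampleness of $L^k$ (and hence the separation of points and tangent vectors) is used to invoke a real version of Bertini's theorem, and it is the sole non-formal input of the argument. Everything else is the elementary observation that a sum of squares of real-valued holomorphic functions is nonnegative and strictly positive as soon as they do not vanish simultaneously.
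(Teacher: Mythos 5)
Your proof is correct and follows exactly the same strategy as the paper's: take the sum of tensor squares of a real basis of $\R H^0(X,L^{k/2})$, observe that it is real with empty real zero locus, and then perturb generically away from the discriminant. You supply more detail than the paper at each step (the real-trivialization computation showing $\sum a_i(x)^2>0$ on $\R X$, the Bertini argument for the discriminant being a proper subvariety, and the openness of the no-real-zeros condition), and you are a bit more careful than the paper in requiring $m_0$ so that $L^m$ is very ample for \emph{all} $m\ge m_0$ rather than just at $m_0$, which is the correct hypothesis for the argument to apply to every even $k\ge k_0$.
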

 \begin{proof}
 Let $m_0$ be the smallest integer such that $L^{m_0}$ is very ample and set $k_0=2m_0$. For any integer $m\geq m_0$, fix a  basis $s_1,\dots,s_{N_{m}}$ of $\R H^0(X,L^m)$ and consider the real section $s=\sum_{i=1}^{N_m}s_i^{\otimes 2},$ which is a real section of $L^{2m}$ whose real vanishing locus   is empty. 
  Remark that  any small perturbation of $s$ inside $\R H^0(X,L^{2m})$ will have empty real vanishing locus and that the discriminant (i.e. the sections which do not vanish transversally) is an algebraic hypersurface of $\R H^0(X,L^{2m})$. We can then find a small perturbation $\sigma$ of $s$ which has the desired properties, namely  $\sigma$ is a real global section of $L^{2m}$  vanishing transversally and whose real  vanishing locus  is empty. 
 \end{proof}

 \begin{defn}\label{definitionvanishing} Let $\sigma\in \R H^0(X,L^k)$ be a section given by Proposition \ref{existance of sigma}, where $k$ is a fixed large enough integer, and let $Z_\sigma$ be its vanishing locus. For any integer $\ell$, we will use the notation $\sigma^{\ell}$ to designate the section $\sigma^{\otimes \ell}$ of $L^{k\ell}$. For any pair of integers $d$ and $\ell$, let us  define the space $H_{d,\sigma^\ell}$
 to be the subspace of $H^0(X,L^d)$ consisting of sections which vanish along $Z_{\sigma}$ with order at least $\ell$.  Similarly, the vector space $\R H_{d,\sigma^\ell}$ denotes  the subspace of $\R H^0(X,L^d)$ of real sections which vanish along $Z_{\sigma}$ with order at least $\ell$. 
 \end{defn}

 \begin{prop}\label{vanishing order} Let $\sigma\in \R H^0(X,L^k)$ be a section given by Proposition \ref{existance of sigma}, for some fixed $k$ large enough. 
 For any pair of integers $d$ and $\ell$,  the space $H_{d,\sigma^\ell}$ coincides with the space of section $s\in H^0(X,L^d)$ such that $s=\sigma^{\ell}\otimes s'$, for some $s'\in H^0(X,L^{d-k\ell})$. Similarly, the space $\R H_{d,\sigma^\ell}$ coincides with the space of real section $s\in \R H^0(X,L^d)$ such that $s=\sigma^{\ell}\otimes s'$, for some $s'\in \R H^0(X,L^{d-k\ell})$.
\end{prop}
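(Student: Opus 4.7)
The ``if'' direction is immediate: if $s = \sigma^{\ell} \otimes s'$ for some $s'\in H^0(X,L^{d-k\ell})$, then in any local trivialization $e$ of $L$ near a point of $Z_\sigma$, the function representing $s$ is the product of the functions representing $\sigma^{\ell}$ and $s'$, and since $\sigma$ vanishes to order exactly one on $Z_\sigma$ (by transversality) $\sigma^{\ell}$ vanishes to order $\ell$, so $s$ vanishes to order at least $\ell$ on $Z_\sigma$.

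For the converse, the strategy is local division followed by a gluing argument. By transversality, $Z_\sigma$ is a smooth reduced hypersurface, and near any point $p\in Z_\sigma$ we can choose a local holomorphic frame $e$ of $L$ together with local coordinates $(z_1,\dots,z_n)$ so that $\sigma = z_1\cdot e^{\otimes k}$ and $Z_\sigma=\{z_1=0\}$ locally. Writing $s=\tilde s\cdot e^{\otimes d}$, the hypothesis that $s$ vanishes to order at least $\ell$ on $Z_\sigma$ translates into $\tilde s = z_1^{\ell}\tilde g$ for some local holomorphic function $\tilde g$, so that locally
\[
s \;=\; \sigma^{\ell}\otimes\bigl(\tilde g\cdot e^{\otimes(d-k\ell)}\bigr).
\]
Equivalently, the a priori meromorphic section $s\otimes\sigma^{-\ell}$ of $L^{d-k\ell}$, holomorphic on $X\setminus Z_\sigma$, extends holomorphically across $Z_\sigma$. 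Denoting this extension by $s'\in H^0(X,L^{d-k\ell})$, we obtain $s=\sigma^{\ell}\otimes s'$, as desired.

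For the real statement, the $\C$-linear map $s'\mapsto\sigma^{\ell}\otimes s'$ from $H^0(X,L^{d-k\ell})$ to $H_{d,\sigma^{\ell}}$ commutes with the real structures because $\sigma^{\ell}$ is real: if $s'$ is real then $\sigma^{\ell}\otimes s'$ is real, and conversely if $s=\sigma^{\ell}\otimes s'$ is real then $s'$ agrees with the real section $s\otimes\sigma^{-\ell}$ on the dense open set $X\setminus Z_\sigma$, hence is real by holomorphic continuation. This identifies $\R H_{d,\sigma^{\ell}}$ with $\R H^0(X,L^{d-k\ell})$ via multiplication by $\sigma^{\ell}$.

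No serious obstacle arises: the whole argument hinges on the transversality of $\sigma$ provided by Proposition \ref{existance of sigma}, which guarantees that $Z_\sigma$ is smooth and reduced so that ``vanishing to order $\ell$'' coincides locally with ``being divisible by $\sigma^{\ell}$''. The global gluing is then automatic because the local quotient is uniquely determined on the dense open set $X\setminus Z_\sigma$.
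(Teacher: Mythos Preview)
Your proof is correct and follows essentially the same approach as the paper: both argue by local division, showing that $s/\sigma^{\ell}$ defines a holomorphic section of $L^{d-k\ell}$ and then passing to the real case via the reality of $\sigma$. Your presentation is slightly more conceptual (viewing $s\otimes\sigma^{-\ell}$ as a meromorphic section that extends across $Z_\sigma$, with uniqueness on the dense open $X\setminus Z_\sigma$ handling the gluing), whereas the paper works explicitly with an open cover and transition functions, but the substance is identical.
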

\begin{proof} If a section $s$ of $H^0(X,L^d)$ is of the form $s=\sigma^{\ell}\otimes s'$, for some $s'\in \R H^0(X,L^{d-k\ell})$, then $s\in H_{d,\sigma^\ell}$ (that is, $s$ vanishes  along $Z_{\sigma}$ with order at least $\ell$). Moreover if such section is real then $s\in\R H_{d,\sigma^\ell}$. Let us then prove the other inclusion.
 Let $s\in H^0(X,L^d)$ be such that $s$ vanishes along $Z_\sigma$ with order at least $\ell$. We want to prove that there exists $s'\in H^{0}(X,L^{d-k})$ such that $s=\sigma^{\ell}\otimes s'$. For this, let $\{U_i\}_i$ be a cover of $X$ by open subsets such that the line bundle $L$ is obtained by gluing together the local models $\{U_i\times\C\}_i$ using the maps $f_{ij}:(x,v)\in\big(U_i\cap U_j\big)\times\C\mapsto (x,g_{ij}(x)v) \in (U_i\cap U_j\big)\times\C$, where $g_{ij}:U_i\cap U_j\rightarrow\C^*$ are holomorphic maps. Remark that, for any integer $d$, the line bundle $L^d$ is obtained by gluing together the same local models  $\{U_i\times\C\}_i$ using the maps  $f^d_{ij}:(x,v)\in \big(U_i\cap U_j\big)\times\C\mapsto (x,g_{ij}^d(x)v) \in(U_i\cap U_j\big)\times\C$. Using these trivializations, a global section $s$ of $L^d$ is equivalent to the data of local holomorphic functions $s_i$ on $U_i$ such that $s_i=g^d_{ij}s_j$ on $U_i\cap U_j$. We can then define locally the section $s'$ we are looking for by setting $s_i'=\frac{s_i}{\sigma_i^{\ell}}$. Indeed, these are holomorphic functions because $s_i$
vanishes along the zero set of $\sigma_i$ with order bigger or equal than $\ell$. With this definition, it is  straightforward to check that the family $\{s_i'\}_i$ glues together and define a global section $s'$ of $L^{d-k\ell}$ with the property that $s=\sigma^{\ell}\otimes s'$.  

The proof for the real case follows from the complex case and from that fact that if $s$ and $\sigma$ are real sections, then $s'$ is also real.
\end{proof} 
\subsection{$\mathcal{L}^2$-orthogonal complement to  $\R H_{d,\sigma^\ell}$ and $\mathcal{C}^1$-estimates}\label{secbergmankernels} In this section, we equip $L$ with a real Hermitian metric $h$ with positive curvature  and  consider the induced $\mathcal{L}^2$-scalar product on $\R H^0(X,L^d)$  given by Equation \eqref{l2 scalar}. The main goal  is to study the real sections of $L^{d}$ which are $\mathcal{L}^2$-orthogonal to the space $\R H_{d,\sigma^\ell}$ defined in Definition \ref{definitionvanishing}.
\begin{defn}[$\mathcal{C}^1$-norm] Let $K\subset X$ be a compact set. We define the $\mathcal{C}^1(K)$-norm of a global section $s$ of $L^d$ to be $\norm{s}_{\mathcal{C}^1(K)}=\max_{x\in K}\norm{s(x)}_{h^d}+\max_{x\in K}\norm{\nabla s(x)}_{h^d}$, where $\norm{\cdot}_{h^d}$ is the norm induced by the Hermitian metric $h^d$, $\nabla$ is the Chern connection on $L^d$ induced by $h$ and $\norm{\nabla s(x)}_{h^d}^2=\displaystyle\sum_{i=1}^n\norm{\nabla_{v_i}s(x)}_{h^d}^2$, with $\big\{v_1,\dots, v_n\big\}$ an orthonormal basis of $T_xX$.
\end{defn}
 \begin{oss} Recall that $\R H_{d,\sigma^\ell}= H_{d,\sigma^\ell}\cap \R H^0(X,L^d)$. The orthogonal complement (with respect to the $\mathcal{L}^2$-scalar product given by \eqref{l2 scalar}) of $\R H_{d,\sigma^\ell}$ inside $\R H^0(X,L^d)$ coincides with $H_{d,\sigma^\ell}^{\perp}\cap \R H^0(X,L^d)$. Here, $H_{d,\sigma^\ell}^{\perp}$ is the orthogonal complement of $H_{d,\sigma^\ell}$ with respect to the $\mathcal{L}^2$-Hermitian product defined by 
\begin{equation}\label{l2hermitian}
 \langle s_1,s_2\rangle_{\mathcal{L}^2}=\int_X h^d(s_1,s_2)\frac{\omega^{\wedge n}}{n!}
 \end{equation}
for any pair of  global sections $s_1,s_2\in H^0(X,L^d)$.
 \end{oss}

\begin{prop}\label{partialbergman} Let $\sigma\in \R H^0(X,L^k)$ be a section given by Proposition \ref{existance of sigma}, for some fixed $k$ large enough. There exists a positive real number $t_0$ such that, for any $t\in(0, t_0)$,  we have the uniform estimate $\norm{\tau}_{\mathcal{C}^1(\R X)}=O(d^{-\infty})$ for any real section $\tau\in\R H_{d,\sigma^{\lfloor t d\rfloor}}^{\perp}$ with $\norm{\tau}_{\mathcal{L}^2}=1$, 
as $d\rightarrow\infty$. Here, $\R H_{d,\sigma^{\lfloor t d\rfloor}}$ is as in Definition \ref{definitionvanishing} and  $\lfloor t d \rfloor$ is the greatest integer less than or equal to $t d$.
\end{prop}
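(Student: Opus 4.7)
The plan is to reduce the pointwise $\mathcal{C}^1$-estimate on $\R X$ to a comparison between the full Bergman kernel and the partial Bergman kernel associated with the subspace $H_{d,\sigma^{\lfloor td\rfloor}}$, and then to invoke the off-diagonal decay of this difference, which is by now standard in the theory of partial/logarithmic Bergman kernels (the references \cite{partialbergmankernel,logbergman} cited just before the statement).

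First I would set $\ell=\lfloor td\rfloor$ and consider the orthogonal projector
\[
\Pi_d^{\perp}\colon H^0(X,L^d)\longrightarrow H_{d,\sigma^{\ell}}^{\perp},
\]
with reproducing kernel $B_d^{\perp}(x,y)=B_d(x,y)-B_d^{\sigma,\ell}(x,y)$, where $B_d$ is the full Bergman kernel on $H^0(X,L^d)$ and $B_d^{\sigma,\ell}$ is the partial Bergman kernel of $H_{d,\sigma^{\ell}}$. By the reproducing property and Cauchy--Schwarz, for any $\tau\in\R H_{d,\sigma^{\ell}}^{\perp}$ with $\|\tau\|_{\mathcal{L}^2}=1$ and any $x\in X$,
\[
\|\tau(x)\|_{h^d}^{2}\leq B_d^{\perp}(x,x),\qquad \|\nabla \tau(x)\|_{h^d}^{2}\leq \sum_{i=1}^{n}\nabla_{v_i}\nabla'_{\bar{v}_i}B_d^{\perp}(x,x),
\]
where the second inequality is obtained by differentiating the reproducing identity $\tau(x)=\langle \tau,B_d^{\perp}(x,\cdot)\rangle_{\mathcal{L}^2}$ and applying Cauchy--Schwarz to $\nabla_v\tau(x)=\langle \tau,\nabla'_{\bar v}B_d^{\perp}(x,\cdot)\rangle_{\mathcal{L}^2}$. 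Hence it is enough to control $B_d^{\perp}(x,x)$ and its covariant derivatives on the diagonal, uniformly on the compact set $\R X$.

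Next I would use the asymptotic description of the partial Bergman kernel associated to a smooth divisor. Writing $\psi=\log\|\sigma\|_{h^k}^2$, the space $H_{d,\sigma^{\ell}}$ is the space of sections whose local $\mathcal{L}^2$-norm with respect to the singular weight $h^{d}\,e^{-\ell\psi}$ is finite, and the asymptotics from \cite{partialbergmankernel,logbergman} describe a forbidden region $\mathcal{F}_{t}\subset X$ outside of which
\[
B_d(x,x)-B_d^{\sigma,\ell}(x,x)=O(d^{-\infty})
\]
together with all its covariant derivatives, uniformly on compact subsets of $X\setminus\mathcal{F}_t$. The forbidden region $\mathcal{F}_t$ is a neighbourhood of $Z_{\sigma}$ (essentially a Kähler tube determined by the equilibrium envelope of the weight); its diameter goes to $0$ as $t\to 0^{+}$. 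Since $\sigma$ is provided by Proposition \ref{existance of sigma} and has empty real vanishing locus, the closed sets $Z_{\sigma}$ and $\R X$ are disjoint in $X$, so there exists $t_0>0$ such that for every $t\in(0,t_0)$ one has $\mathcal{F}_t\cap \R X=\emptyset$. For such $t$, the uniform $O(d^{-\infty})$ estimate for $B_d^{\perp}(x,x)$ and its first covariant derivatives holds on all of $\R X$; plugging this into the two Cauchy--Schwarz bounds above and summing over $x\in\R X$ gives $\|\tau\|_{\mathcal{C}^1(\R X)}=O(d^{-\infty})$.

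The main technical obstacle is the second step: correctly identifying the forbidden region $\mathcal{F}_{t}$ for the vanishing threshold $\ell=\lfloor td\rfloor$ and verifying that the off-diagonal (and derivative) $O(d^{-\infty})$ estimates for $B_d-B_d^{\sigma,\ell}$ remain valid uniformly on a neighbourhood of $\R X$ as $d\to\infty$. This amounts to choosing $t_0$ small enough so that the Okounkov-type ``forbidden'' set associated with the singular weight $\ell\psi/d$ stays in an arbitrarily small tubular neighbourhood of $Z_\sigma$; since $\mathrm{dist}(\R X,Z_\sigma)>0$, such a $t_0$ exists and the argument closes. All the Cauchy--Schwarz manipulations, and the passage from pointwise to $\mathcal{C}^1(\R X)$ estimates, are then routine.
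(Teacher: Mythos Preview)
Your proposal is correct and follows essentially the same route as the paper: both reduce the $\mathcal{C}^1(\R X)$-bound on a unit section of $\R H_{d,\sigma^{\lfloor td\rfloor}}^{\perp}$ to the estimate $\|P_d-P_d^{0}\|_{\mathcal{C}^r(\R X)}=O(d^{-\infty})$ for the difference of the full and partial Bergman kernels, and then invoke \cite[Theorem 1.3]{partialbergmankernel} on the compact $\R X$, which is disjoint from $Z_\sigma$ by construction. The only cosmetic differences are that the paper quotes the cited theorem directly (``for each compact $K$ disjoint from $Z_\sigma$ there is $t_0(K)$'') rather than describing the forbidden region $\mathcal{F}_t$, and that you make the Cauchy--Schwarz step explicit; also, ``summing over $x\in\R X$'' in your last line should read ``taking the maximum over $x\in\R X$''.
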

\begin{proof}
Let $\tau_1,\dots,\tau_{m_d}$ be an orthonormal basis of $ H^{\perp}_{d,\sigma^{\lfloor t d\rfloor}}$ and $s_1,\dots,s_{N_d-m_d}$ be an orthonormal basis of $H_{d,\sigma^{\lfloor t d\rfloor}}$ (the Hermitian products are the ones induced by Equation \eqref{l2hermitian}).
 We will denote by $P^{\perp}_d(x)=\sum_{i=1}^{m_d}\norm{\tau_i(x)}^2_{h^d}$ and by $P^{0}_d(x)=\sum_{i=1}^{N_d-m_d}\norm{s_i(x)}^2_{h^d}$, so that $P^{\perp}_d+P^{0}_d$ equals the Bergman function $P_d$ (i.e. the value of the Bergman kernel on the diagonal).
The function $P^0_d$ equals the \textit{partial Bergman kernel} of order $\lfloor t d\rfloor$ associated with the subvariety $Z_\sigma$, see \cite{partialbergmankernel}.
 By \cite[Theorem 1.3]{partialbergmankernel}, for any compact subset $K$ of $X$ which is disjoint from $Z_\sigma$ and for any $r\in\mathbb{N}$, there exists a positive real number $t_0(K)$ such that, for any $t< t_0(K)$,  one has $\norm{P_d-P^0_d}_{\mathcal{C}^r(K)}=O(d^{-\infty})$. Now,  we have $P^{\perp}_d=P_d-P^0_d$, so that, by choosing $K=\R X$, which is disjoint from $Z_\sigma$ by construction of $\sigma$, we obtain $\norm{P^\perp_d}_{\mathcal{C}^r(\R X)}=O(d^{-\infty})$, as soon as $t< t_0=t_0(\R X)$. This implies that for any  $t< t_0$  and  any  $\tau\in H_{d,\sigma^{\lfloor t d\rfloor}}^{\perp}$, with $\norm{\tau}_{\mathcal{L}^2}=1$, one has $\norm{\tau}_{\mathcal{C}^1(\R X)}=O(d^{-\infty})$  and, in particular, this happens if $\tau\in\R H_{d,\sigma^{\lfloor t d\rfloor}}^{\perp}$.
\end{proof}

\begin{prop}\label{logberg} Let $\sigma\in \R H^0(X,L^k)$ be a section given by Proposition \ref{existance of sigma}, for some fixed $k$ large enough. There exists $c>0$ (depending on $k$) such that, we have the uniform estimate $\norm{\tau}_{\mathcal{C}^1(\R X)}\leq O(e^{-c\sqrt{d}\log d})$ for any real section $\tau\in \R H_{d,\sigma}^{\perp}$ with $\norm{\tau}_{\mathcal{L}^2}=1$.  If the real Hermitian metric on $L$ is  analytic, then  we have the uniform estimate $\norm{\tau}_{\mathcal{C}^1(\R X)}\leq O(e^{-c d})$ for any real section $\tau\in \R H_{d,\sigma}^{\perp}$ with $\norm{\tau}_{\mathcal{L}^2}=1$.
\end{prop}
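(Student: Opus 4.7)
The plan is to mirror the proof of Proposition \ref{partialbergman}, replacing the partial Bergman kernel asymptotics of Ross--Singer by the logarithmic Bergman kernel asymptotics with exponential remainder from \cite{logbergman}, which remain valid when the vanishing order along $Z_\sigma$ is kept \emph{fixed} (equal to one) while the degree $d\to\infty$.

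First, I would complete $\tau_1,\dots,\tau_{m_d}$ and $s_1,\dots,s_{N_d-m_d}$ into orthonormal bases of $H_{d,\sigma}^{\perp}$ and $H_{d,\sigma}$ respectively, and introduce
\begin{equation*}
P_d^{\perp}(x) = \sum_{i=1}^{m_d}\norm{\tau_i(x)}^2_{h^d}, \qquad P_d^{0}(x) = \sum_{i=1}^{N_d-m_d}\norm{s_i(x)}^2_{h^d},
\end{equation*}
so that $P_d = P_d^{\perp} + P_d^{0}$ is the on-diagonal Bergman function. The function $P_d^{0}$ is the on-diagonal restriction of the logarithmic Bergman kernel associated to the smooth hypersurface $Z_\sigma$.

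Next, I would invoke the logarithmic Bergman kernel estimates of \cite{logbergman}: on any compact $K\subset X$ disjoint from $Z_\sigma$, the difference $P_d-P_d^{0}$ is exponentially small in $\mathcal{C}^1(K)$-norm, with rate $e^{-c\sqrt{d}\log d}$ in the smooth case and rate $e^{-cd}$ when the metric $h$ is real-analytic. Since $\R X\cap Z_\sigma=\emptyset$ by construction of $\sigma$ (Proposition \ref{existance of sigma}), taking $K=\R X$ yields the corresponding $\mathcal{C}^1(\R X)$-bound on $P_d^{\perp}$. The extremal property of reproducing kernels gives, for any unit $\tau\in\R H_{d,\sigma}^{\perp}\subset H_{d,\sigma}^{\perp}$ and any $x\in\R X$, the pointwise bound $\norm{\tau(x)}_{h^d}^2\leq P_d^{\perp}(x)$ together with an analogous control on $\norm{\nabla\tau(x)}_{h^d}$ obtained by differentiating the reproducing kernel and estimating its jets in terms of $\norm{P_d^{\perp}}_{\mathcal{C}^1(K')}$ on a slightly larger compact $K'\subset X\setminus Z_\sigma$. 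Taking a maximum over $\R X$ yields the announced $\mathcal{C}^1$-estimates.

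The main obstacle is invoking the logarithmic Bergman kernel results in the precise form required, namely pointwise $\mathcal{C}^1$-estimates, uniform on $\R X$, with the dichotomy $e^{-c\sqrt{d}\log d}$ versus $e^{-cd}$ according to the regularity of $h$. The analytic rate reflects a Christ-type exponential decay of the Bergman kernel away from the divisor; the smooth rate is of Agmon type, with the extra $\log d$ factor in the exponent reflecting the smoothing scale $(\log d)/\sqrt{d}$ at which one can localize away from $Z_\sigma$. Apart from this substitution of kernel asymptotics, the combinatorial part of the argument is strictly formal and identical to Proposition \ref{partialbergman}.
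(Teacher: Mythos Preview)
Your approach is essentially the same as the paper's: split the Bergman function as $P_d=P_d^0+P_d^{\perp}$ with $P_d^0$ the logarithmic Bergman kernel, show $P_d^{\perp}$ is exponentially small in $\mathcal{C}^r(\R X)$ since $\R X\cap Z_\sigma=\emptyset$, and deduce the $\mathcal{C}^1$ bound on unit $\tau\in\R H_{d,\sigma}^{\perp}$ by the extremal property. The one point worth sharpening, which you flagged as ``the main obstacle'', is that \cite{logbergman} as stated only gives $\norm{P_d-P_d^0}_{\mathcal{C}^r(K)}=O(d^{-\infty})$; the paper obtains the exponential rates by revisiting that proof and feeding in the off-diagonal Bergman kernel decay $O(e^{-c\sqrt{d}\log d})$ (resp.\ $O(e^{-cd})$ in the analytic case) at \emph{fixed} pairs of points from \cite{hezarilu}, rather than the $O(d^{-\infty})$ decay at points separated by $\geq (\log d)/\sqrt{d}$.
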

\begin{proof}
The proof follows the same idea of Proposition \ref{partialbergman}. 
Let $\tau_1,\dots,\tau_{m_d}$ be an orthonormal basis of $ H^{\perp}_{d,\sigma}$ and $s_1,\dots, s_{N_d-m_d}$ be an orthonormal basis of $H_{d,\sigma}$ (the Hermitian products are the ones induced by Equation \eqref{l2hermitian}).
 We will denote by $P^{\perp}_d(x)=\sum_{i=1}^{m_d}\norm{\tau_i(x)}^2_{h^d}$ and by $P^{0}_d(x)=\sum_{i=1}^{N_d-m_d}\norm{s_i(x)}^2_{h^d}$, so that $P^{\perp}_d+P^{0}_d$ equals the Bergman function $P_d$.
 The function $P^0_d$ equals the \textit{logarithmic Bergman kernel}  associated with the subvariety $Z_\sigma$, see \cite{logbergman}.
 In \cite[Theorem 3.4]{logbergman}, it is proved that for any sequence of compact  sets $K_d$ whose distance from $Z_\sigma$ is bigger than $\frac{\log d}{\sqrt{d}}$, one has $\norm{P_d-P^0_d}_{\mathcal{C}^r(K_d)}=O(d^{-\infty})$. The proof uses the fact that the $\mathcal{C}^r$-norm of a peak section \cite{tian} at a point $z_d$ (depending on $d$) is $O(d^{-\infty})$, as soon as the distance between the peak and the point $z_d$ is bigger than $\frac{\log d}{\sqrt{d}}$. This  is a direct consequence of the fact that the $\mathcal{C}^r$-norm of the Bergman kernel at a pair of points $(x_d,y_d)$ with $\textrm{dist}(x_d,y_d)>\frac{\log d}{\sqrt{d}}$ is $O(d^{-\infty})$. Now, the Bergman kernel decreases exponentially fast outside any fixed neighborhood of the diagonal, that is its $\mathcal{C}^r$-norm at a \textit{fixed} pair of distinct points $(x,y)$  is  $O(e^{-c\sqrt{d}\log d})$, in the general case, and  $O(e^{-cd})$, if the metric on $L$ is  analytic (see, for instance, \cite[Theorem 1.1 and Corollary 1.4]{hezarilu}). This implies that  the $\mathcal{C}^r$-norm of a peak section at a \textit{fixed, not depending on $d$}, point $z$ is $O(e^{-c\sqrt{d}\log d})$, in the general case, and  $O(e^{-cd})$, if the metric on $L$ is  analytic.
 
  
Using this fact, the same proof as in  \cite[Theorem 3.4]{logbergman} gives us  $\norm{P_d-P^0_d}_{\mathcal{C}^r(K)}\leq O(e^{-c\sqrt{d}\log d})$ (and $\norm{P_d-P^0_d}_{\mathcal{C}^r(K)}\leq O(e^{-cd})$, in the analytic case), where $K$ is any compact disjoint from $Z_\sigma$ and $c$ is a positive constant depending on $K$ and $r\in\mathbb{N}$. 
 
 Now,  we have $P^{\perp}_d=P_d-P^0_d$, so that, by choosing $K=\R X$, which is disjoint from $Z_\sigma$ by construction of $\sigma$, we obtain $\norm{P^\perp_d}_{\mathcal{C}^r(\R X)}\leq O(e^{-c\sqrt{d}\log d})$. This implies that for any  $\tau\in H_{d,\sigma}^{\perp}$, with $\norm{\tau}_{\mathcal{L}^2}=1$, one has $\norm{\tau}_{\mathcal{C}^1(\R X)}\leq O(e^{-c\sqrt{d}\log d})$  and, in particular, this happens if $\tau\in\R H_{d,\sigma}^{\perp}$. Similarly, for the analytic case, we have $\norm{\tau}_{\mathcal{C}^1(\R X)}\leq O(e^{-cd})$  for any  $\tau\in\R H_{d,\sigma}^{\perp}$ with $\norm{\tau}_{\mathcal{L}^2}=1$.
\end{proof}
\section{Geometry of the discriminant: distance, degree and volume}\label{Secdiscr}
Let $\R\Delta_d\subset\R H^0(X,L^d)$ denote the discriminant, that is the subset of sections which do not vanish transversally along $\R X$.   In this section, we will study and compute several quantities related to the discriminant: its degree, the volume (with respect to the Gaussian measure $\mu_d$) of  tubular conical neighborhoods (see Definition \ref{tubular}) and the function "distance to the discriminant".

As in the previous section,  we  equip $L$ with a real Hermitian metric $h$ with positive curvature and  consider the induced $\mathcal{L}^2$-scalar product on $\R H^0(X,L^d)$  given by Equation \eqref{l2 scalar}. Given $s\in\R H^0(X,L^d)$, the next lemma gives us an explicit formula for computing the distance from $s$ to the discriminant $\R\Delta_d$. This distance is computed with respect to the $\mathcal{L}^2$-scalar product, that is   $$\mathrm{dist}_{\R\Delta_d}(s):=\min_{\tau\in\R\Delta_d}\norm{s-\tau}_{\mathcal{L}^2}.$$
\begin{lemma}[Distance to the discriminant]\label{distance to the discriminant} Let $(L,c_L)$ be a real Hermitian ample line bundle over a real algebraic variety $X$ of dimension $n$. Let denote by $\norm{\cdot}_{h^d}$ the Hermitian metric on $L^d$ induced by a Hermitian metric $h$ on $L$ with positive curvature.  Then, as $d\rightarrow\infty$, we have the uniform estimate $$\mathrm{dist}_{\R\Delta_d}(s)=\min_{x\in\R X}\bigg(\frac{\norm{s(x)}^2_{h^d}}{d^n}+\frac{\norm{\nabla s(x)}^2_{h^d}}{d^{n+1}}\bigg)^{1/2}\big(\pi^{n/2}+O(1/d)\big).$$
\end{lemma}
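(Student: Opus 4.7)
The plan is to reduce the global optimization to a family of codimension-$(n+1)$ linear distance problems indexed by $x\in\R X$, and then to compute each such distance using near-diagonal $2$-jet Bergman kernel asymptotics.

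First, I would identify the discriminant with a union of linear subspaces. Since a real holomorphic section $\tau$ fails to vanish transversally at $x\in\R X$ exactly when $\tau(x)=0$ and $\nabla\tau(x)=0$, we have
$$\R\Delta_d=\bigcup_{x\in\R X}V_x,\qquad V_x:=\ker T_x,$$
where $T_x:\R H^0(X,L^d)\to W_x:=\R L^d_x\oplus\mathrm{Hom}_\R(T_x\R X,\R L^d_x)$ is given by $T_x(\tau):=(\tau(x),\nabla\tau(x))$, and $W_x$ carries the natural inner product induced by $h^d$ and the Riemannian metric on $\R X$. For $d$ large enough the Bergman asymptotics recalled below make $T_x$ surjective uniformly in $x$, so $V_x$ has real codimension $n+1$. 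Continuity of $x\mapsto V_x$ together with compactness of $\R X$ gives
$$\mathrm{dist}_{\R\Delta_d}(s)=\min_{x\in\R X}\mathrm{dist}_{\mathcal{L}^2}(s,V_x).$$

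For each fixed $x$, orthogonal projection onto $V_x^\perp=\mathrm{Im}\,T_x^*$ yields
$$\mathrm{dist}_{\mathcal{L}^2}(s,V_x)^2=\bigl\langle(T_xT_x^*)^{-1}T_x(s),T_x(s)\bigr\rangle_{W_x}.$$
The operator $T_xT_x^*\in\mathrm{End}(W_x)$ is precisely the $2$-jet of the Bergman kernel of $L^d$ at $(x,x)$. Near-diagonal asymptotics in K\"ahler normal coordinates (Tian, Zelditch, Catlin, Ma--Marinescu), uniform in $x\in\R X$ by compactness, give for any $\mathcal{L}^2$-orthonormal basis $\{e_i\}$ of $H^0(X,L^d)$:
\begin{align*}
\sum_i\norm{e_i(x)}^2_{h^d}&=\frac{d^n}{\pi^n}(1+O(1/d)),\\
\sum_i\norm{\nabla_v e_i(x)}^2_{h^d}&=\frac{d^{n+1}}{\pi^n}\norm{v}^2(1+O(1/d)),
\end{align*}
while the cross terms $\sum_i\langle e_i(x),w\rangle\,\nabla_v e_i(x)$ are of lower order $O(d^{n-1})$ relative to the diagonal entries. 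Consequently $T_xT_x^*$ is, up to a multiplicative $(1+O(1/d))$ factor, block diagonal with scalar blocks $\frac{d^n}{\pi^n}$ on the $0$-jet factor and $\frac{d^{n+1}}{\pi^n}\mathrm{Id}$ on the $1$-jet factor. Inverting and substituting produces
$$\mathrm{dist}_{\mathcal{L}^2}(s,V_x)^2=\pi^n\Bigl(\frac{\norm{s(x)}^2_{h^d}}{d^n}+\frac{\norm{\nabla s(x)}^2_{h^d}}{d^{n+1}}\Bigr)(1+O(1/d))$$
uniformly in $x$. Taking square roots and minimizing over $x\in\R X$ gives the claimed estimate.

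The main obstacle is uniformity: one must guarantee that the Bergman asymptotics hold uniformly in $x\in\R X$ (which follows from compactness, as the K\"ahler normal-coordinate expansions are uniform on compact subsets of $X$), that the smallest eigenvalue of $T_xT_x^*$ is $\gtrsim d^n$ uniformly in $x$ so the inversion is stable, and that the off-diagonal $(0,1)$-jet cross terms really are subdominant after the inversion. This last point is verified by a direct expansion of the $2$-jet of the Bergman kernel in K\"ahler normal coordinates around each $x$, where the mixed $\partial/\bar\partial$ cross derivative vanishes at leading order.
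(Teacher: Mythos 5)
Your proposal is correct and follows essentially the same route as the paper: reduce to the pointwise linear problem $\mathrm{dist}_{\mathcal L^2}(s,V_x)$, express that distance via the Gram operator of the evaluation-plus-derivative map (your $T_xT_x^*$ is exactly the matrix $A=MM^t$ in the paper), plug in near-diagonal Bergman kernel asymptotics for the 2-jet to see that $A$ is asymptotically block-diagonal with entries $d^n/\pi^n$ and $d^{n+1}/\pi^n$, invert, and minimize over $x\in\R X$. The only difference is cosmetic: you phrase the linear algebra in operator language where the paper works with an explicit matrix $M$ built from an orthonormal basis.
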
 
\begin{proof}
We will follow the approach of \cite[Section 3]{Raffalli} which we combine with Bergman kernel estimates along the diagonal \cite{daima,ma2,ber1,zel}. Let us define $\R \Delta_{d,x}$ to be the linear subspace of real sections which do not vanish transversally to $x$, that is 
$$\R \Delta_{d,x}=\{s\in\R H^0(X,L^d), s(x)=0\hspace{2mm} \textrm{and}\hspace{2mm} \nabla s(x)=0\}.$$   Let us fix some notations:
\begin{itemize}
\item We fix once for all an orthonormal basis $s_1,\dots,s_{N_d}$ of $\R H^0(X,L^d)$ (with respect to the $\mathcal{L}^2-$scalar product \eqref{l2 scalar}) and a trivialization $e_L$ of $L$ around $x$ whose norm equals $1$ at $x$.  
\item Any section $s_i$ of the basis can be written as $s_i=f_ie_L^d$.  We will denote by $f(x)$ the line vector $\big(f_1(x),\dots,f_{N_d}(x)\big)\in\R^{N_d}$. 
\item We will also fix an orthonormal basis $\{\frac{\partial}{\partial x_1},\dots,\frac{\partial}{\partial x_n}\}$ of $T_x\R X$ (the Riemannian metric is the one induced by the curvature form $\omega$ of the real Hermitian line bundle $(L,h)$). 
\item We denote by $\nabla_j$ the covariant derivative along $\frac{\partial}{\partial x_j}$ on $L^d$ induced by the metric $h^d$. Locally, the covariant derivative along $\frac{\partial}{\partial x_j}$ can be written as $\partial_j:=\frac{\partial}{\partial x_j}+\lambda_j$ (that is the sum of the derivative along $\frac{\partial}{\partial x_j}$ plus a term of order $0$).
\item  We will denote by $M$ the  $(n+1)\times N_d$ matrix whose $n+1$ lines are $f(x),\partial_1f(x),\dots, \partial_nf(x)$, where $\partial_jf(x)=(\partial_jf_1(x),\dots, \partial_jf_{N_d}(x))\in\R^{N_d}$.
\end{itemize}
 Using these notations, a real holomorphic section $s=\sum_{i}a_is_i$ lies in $\R \Delta_{d,x}$ if and only if $s(x)=0$ and $\nabla_{j}s(x)=0$ for any $j\in\{1,\dots,n\}$. Using the local picture, these conditions can be written as $\sum_{i=1}^{N_d}a_if_i(x)=0$ and  $\sum_{i=1}^{N_d}a_i\partial_jf_i(x)=0$ for any $j\in\{1,\dots,n\}$.
 
We  now compute $\textrm{dist}(s,\R \Delta_{d,x})$, for any $s\in\R H^0(X,L^d)$. By definition, the distance $\textrm{dist}(s,\R \Delta_{d,x})$ equals $\min_{s+\tau\in\R \Delta_{d,x}}\norm{\tau}_{\mathcal{L}^2},$ that is, it equals the $\mathcal{L}^2$-norm of a section $\tau$ which is orthogonal to $\R \Delta_{d,x}$ and such that $s+\tau\in\R \Delta_{d,x}$. Writing $s=\sum_{i}a_is_i$ and $\tau=\sum_ib_is_i$, the last condition reads $M (a+b)=0$, where $a=(a_1,\dots,a_{N_d})^t$, $b=(b_1,\dots,b_{N_d})^t$ and the multiplication is the standard matrix multiplication. The condition "$\tau$ is orthogonal to $\R \Delta_{d,x}$" can be written as "there exists $\alpha=(\alpha_0,\dots,\alpha_n)^t\in\R^{n+1}$ such that $b=M^t\alpha$". Putting together these two conditions, we obtain $M (a+M^t\alpha)=M a+M M^t\alpha=0$. Now, denoting $A=M M^t$, we have $\alpha=-A^{-1} M a$, so that $b=-M^tA^{-1}M a$.
We then obtain 
\begin{multline}\label{chaineofequalityinnorm}
\textrm{dist}^2(s,\R \Delta_{d,x})=\norm{\tau}^2_{\mathcal{L}^2}=\norm{b}^2=\norm{M^tA^{-1}M a}^2\\=a^tM^t(A^{-1})^tMM^tA^{-1}M a=a^tM^t(A^{-1})^tM a.
\end{multline}
  Remark now that $A$ is a $(n+1)\times(n+1)$-symmetric matrix of the following form
\[
\begin{bmatrix}
\langle f(x),f(x)\rangle & \langle \partial_1f(x),f(x)\rangle &\cdots & \langle \partial_nf(x),f(x)\rangle 
 \\
\langle f(x),\partial_1f(x)\rangle & \langle \partial_1f(x),\partial_1f(x)\rangle &\cdots & \langle \partial_nf(x),\partial_1f(x)\rangle 
 \\
 \vdots & \vdots & \ddots & \vdots 
 \\
\langle f(x),\partial_nf(x)\rangle & \langle \partial_1f(x),\partial_nf(x)\rangle &\cdots & \langle \partial_nf(x),\partial_nf(x)\rangle 
\end{bmatrix}
\]
where $\langle \cdot,\cdot\rangle$ denotes the standard scalar product on $\R^{N_d}$.

We can see that the quantities $\langle f(x),f(x)\rangle$,  $\langle \partial_if(x),f(x)\rangle$ and $\langle \partial_if(x),\partial_jf(x)\rangle$ equal the value of the Bergman kernel  and of its first derivatives at $(x,x)$. The asymptotics of these quantities are well-known (see, for instance, \cite{daima,ma2,ber1,zel}). In particular, we have $\langle f(x),f(x)\rangle=\frac{d^n}{\pi^n}+O(d^{n-1})$, $\langle \partial_if(x),f(x)\rangle=O(d^{n-1})$, $\langle \partial_if(x),\partial_jf(x)\rangle=O(d^n)$ for $i\neq j$ and  $\langle \partial_if(x),\partial_if(x)\rangle=\frac{d^{n+1}}{\pi^n}+O(d^{n})$, as $d\rightarrow\infty$.  As a consequence, as $d\rightarrow\infty$, the matrix $A$ equals
\[
\begin{bmatrix}
\frac{d^n}{\pi^n}+O(d^{n-1}) & O(d^{n-1})   &\cdots & O(d^{n-1}) 
 \\
O(d^{n-1})   & \frac{d^{n+1}}{\pi^n}+O(d^{n})  &\cdots & O(d^{n}) 
 \\
 \vdots & \vdots & \ddots & \vdots 
 \\
O(d^{n-1})  &  O(d^{n})  &\cdots &  \frac{d^{n+1}}{\pi^n}+O(d^{n})
\end{bmatrix}=
\begin{bmatrix}
\frac{d^n}{\pi^n} & 0  &\cdots &0
 \\
0  & \frac{d^{n+1}}{\pi^n} &\cdots & 0
 \\
 \vdots & \vdots & \ddots & \vdots 
 \\
0 &  0 &\cdots &  \frac{d^{n+1}}{\pi^n}
\end{bmatrix} \bigg(\mathrm{Id}+O(d^{-1})\bigg)
\]
so that $A^{-1}$ equals
\[A^{-1}=
\begin{bmatrix}
d^{-n} & 0  &\cdots &0
 \\
0  & d^{-n-1} &\cdots & 0
 \\
 \vdots & \vdots & \ddots & \vdots 
 \\
0 &  0 &\cdots &  d^{-n-1}
\end{bmatrix} \bigg(\pi^n\mathrm{Id}+O(d^{-1})\bigg).
\]
In particular, recalling the definition of the matrix $M$ and the vector $a$ given above, we get that \eqref{chaineofequalityinnorm} equals $\bigg(\frac{\norm{s(x)}^2_{h^d}}{d^n}+\frac{\norm{\nabla s(x)}^2_{h^d}}{d^{n+1}}\bigg)\big(\pi^n+O(1/d)\big)$. Taking the minimum over $x\in\R X$, we obtain the result.
\end{proof}

\begin{lemma}[Degree of the discriminant]\label{degree of discriminant} Let $(L,c_L)$ be a real ample line bundle over a real algebraic variety $X$ of dimension $n$ and denote by $\R\Delta_d$  the discriminant  in $\R H^0(X,L^d)$. Then, there exists an homogeneous real  polynomial $Q_d$ vanishing on $\R \Delta_d$ and such that ${\deg(Q_d)=(n+1)\int_{X}c_1(X)^nd^n+O(d^{n-1})}$.
\end{lemma}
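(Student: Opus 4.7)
The plan is to construct $Q_d$ as the defining equation of the \emph{complex} discriminant, restricted to the real part of $H^0(X,L^d)$. Let $\Delta_d \subset \mathbb{P}(H^0(X,L^d))$ denote the set of classes $[s]$ such that $Z_s$ has at least one singular point in $X$. Since a section non-transverse along $\R X$ is \emph{a fortiori} non-transverse somewhere in $X$, one has $\R\Delta_d \subset \Delta_d \cap \mathbb{P}(\R H^0(X,L^d))$. Moreover $\Delta_d$ is stable under the antiholomorphic involution induced by $c_X$, so its defining polynomial can be chosen with real coefficients in a real basis of $H^0(X,L^d)$; restricting to $\R H^0(X,L^d)$ then yields a real homogeneous polynomial $Q_d$ vanishing on $\R\Delta_d$, of the same degree as $\Delta_d$.

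To compute $\deg\Delta_d$, I would use the standard incidence variety
\[
I = \{([s],x) \in \mathbb{P}(H^0(X,L^d)) \times X : s(x) = 0,\ \nabla s(x) = 0\}.
\]
For $d$ large enough, $L^d$ is very ample and the first jet bundle $J^1(L^d)$ is globally generated; then $I$ is cut out, inside $\mathbb{P}(H^0) \times X$, as the zero locus of the tautological bundle map $p_1^*\mathcal{O}(-1) \to p_2^*J^1(L^d)$, so it is smooth of pure codimension $n+1$ with class
\[
[I] = c_{n+1}\bigl(p_2^*J^1(L^d) \otimes p_1^*\mathcal{O}(1)\bigr).
\]
For generic $[s] \in \Delta_d$ the zero locus $Z_s$ has a unique singular point, so $p_1 : I \to \Delta_d$ is birational and $\Delta_d$ is a hypersurface with $(p_1)_*[I] = [\Delta_d]$. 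Setting $H = c_1(\mathcal{O}(1))$, the projection formula gives
\[
\deg\Delta_d = \int_{\mathbb{P}(H^0)\times X} [I] \cdot p_1^* H^{N_d - 2}.
\]
Expanding $c_{n+1}(J^1(L^d) \otimes \mathcal{O}(1)) = \sum_{i=0}^{n+1} c_i(J^1(L^d)) H^{n+1-i}$, only the term with $i = n$ produces $H^{N_d - 1}$ and survives the fiber integration over $\mathbb{P}^{N_d - 1}$, so $\deg\Delta_d = \int_X c_n(J^1(L^d))$.

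The leading-order computation then uses the jet bundle exact sequence
\[
0 \to \Omega^1_X \otimes L^d \to J^1(L^d) \to L^d \to 0,
\]
which yields $c(J^1(L^d)) = c(\Omega^1_X \otimes L^d)(1 + d\,c_1(L))$. The tensor-product formula applied to the rank $n$ bundle $\Omega^1_X$ gives $c_k(\Omega^1_X \otimes L^d) = \binom{n}{k} d^k c_1(L)^k + O(d^{k-1})$, so
\[
c_n(J^1(L^d)) = (d\,c_1(L))^n + n (d\,c_1(L))^{n-1} \cdot d\,c_1(L) + O(d^{n-1}) = (n+1) d^n c_1(L)^n + O(d^{n-1}),
\]
and integration over $X$ produces the announced leading term $(n+1)\int_X c_1(L)^n \cdot d^n + O(d^{n-1})$.

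The main technical point, which I would handle by a standard Bertini-type argument, is verifying that $I$ is irreducible and that $p_1|_I : I \to \Delta_d$ is generically one-to-one, so that the Chern class computation really yields $\deg\Delta_d$ and not a positive multiple of it. Both properties follow from the very ampleness of $L^d$ for $d$ large: global generation of $J^1(L^d)$ makes $I$ a projective bundle over $X$ (hence irreducible), and the generic singular member of a pencil acquires a single simple node.
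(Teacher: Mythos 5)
Your proposal is correct, and it reaches the right asymptotics by a somewhat different route than the paper. The paper's proof reduces $\deg \Delta_d$ to a count of singular fibers of a Lefschetz pencil -- a generic line in $\mathbb{P}(H^0(X,L^d))$ meets $\mathbb{P}(\Delta_d)$ precisely in the parameters of the singular members of the associated pencil -- and then invokes an external reference (\cite[Proposition 2.3]{anc}) for the asymptotic count of such fibers. You instead set up the incidence variety $I$, identify $[I]$ with the top Chern class of $p_2^* J^1(L^d)\otimes p_1^*\mathcal{O}(1)$, push forward to $\mathbb{P}(H^0)$, and carry out the Chern class expansion explicitly. Both methods ultimately rest on the same identity $\deg\Delta_d=\int_X c_n(J^1(L^d))$; yours is more self-contained, the paper's is shorter because the Chern class computation is delegated to a citation. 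Your treatment of the generic birationality of $p_1|_I$ is a bit quick but it is the standard Lefschetz-pencil fact and you indicate the right justification (global generation of jets for $d$ large, generic singular member having a single node).

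One substantive discrepancy deserves flagging: your leading term is $(n+1)\int_X c_1(L)^n\,d^n$, whereas the lemma as stated (and the paper's proof) writes $(n+1)\int_X c_1(X)^n\,d^n$. Your version is the correct one. Indeed $\int_X c_1(L)^n>0$ since $L$ is ample, while $\int_X c_1(X)^n$ can be zero or negative for a general $X$, so it cannot possibly be the leading coefficient of a degree; and concretely for $(X,L)=(\mathbb{P}^1,\mathcal{O}(1))$ the discriminant of degree-$d$ binary forms has degree $2(d-1)\sim 2d=(n+1)d\int c_1(L)$, not $4d=(n+1)d\int c_1(X)$. So $c_1(X)^n$ in the lemma appears to be a typo for $c_1(L)^n$; the slip is harmless downstream, since Lemma \ref{volume of tubular neighborhood} only uses that the degree grows like $d^n$.
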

\begin{proof}  Let $\Delta_d\subset H^0(X,L^d)$ be the (complex) discriminant. First, remark that if $s\in\R\Delta_d$, then $s\in\Delta_d$, so that, if we find a real polynomial vanishing on $\Delta_d$, then it will vanish also on $\R\Delta_d$. We will then find a polynomial vanishing along $\Delta_d$ and estimate its degree.  Remark that $\Delta_d$ is a cone  in $H^0(X,L^d)$, so that the degree of $\Delta_d$ equals the number of intersection points of a generic line $\gamma$ in $\mathbb{P}\big(H^0(X,L^d)\big)$  with $\mathbb{P}(\Delta_d)$. We remark that a line $\gamma$ in $\mathbb{P}\big(H^0(X,L^d)\big)$ induces a Lefschetz pencil $u:X\dashrightarrow \C P^1$, by sending $x\in X$ to $[s_1(x):s_2(x)]\in\C P^1$, where $s_1$ and $s_2$ are two distinct sections lying on $\gamma$. Now, the cardinality of the intersection $\gamma\cap \mathbb{P}(\Delta_d)$ corresponds exactly to the number of singular fibers of the Lefschetz pencil $u$. By \cite[Proposition 2.3]{anc}, we then have $\deg \big(\mathbb{P}(\Delta_d)\big)=(n+1)\int_{X}c_1(X)^nd^n+O(d^{n-1})$. In particular, there exists an homogenous polynomial $Q_d$ of degree $\deg\big(\mathbb{P}(\Delta_d)\big)$ vanishing on $\Delta_d$. 
Also,  if $s\in\Delta_d$, then we have $c_L\circ s\circ c_X\in\Delta_d$, so that $\Delta_d$ is a real algebraic hypersurface in $H^0(X,L^d)$ (with respect to the natural real structure $s\mapsto c_{L^d}\circ s\circ c_X$). It is then possible to choose the polynomial $Q_d$ to be real.
\end{proof}

\begin{defn}\label{tubular} A \textit{tubular conical neighborhood} is a tubular neighborhood which is a cone (that is, if $s$ is in the neighborhood, then $\lambda s$ is also in the neighborhood, for any $\lambda\in\R^*$).
\end{defn}
 To define a tubular conical neighborhood of $\R\Delta_d$, it is then enough to define its trace on the unit sphere $S_d:=\{s\in\R H^0(X,L^d),\norm{s}_{\mathcal{L}^2}=1\}$. We  also denote by $S\Delta_d$ the trace of the discriminant on $S_d$, that is $S\Delta_d=S_d\cap \R\Delta_d$.
  The next lemma  estimates the measure of \textit{very small} tubular conical neighborhoods of the discriminant $\R\Delta_d$, that are tubular conical neighborhoods whose trace on $S_d$ is of the form $\{s\in S_d, \mathrm{dist}(s,S\Delta_d)< r_d\}$, for $r_d$ a sequence of real numbers which goes to zero \textit{at least} as $d^{-2n}$.
\begin{lemma}[Volume of tubular conical neighborhoods] \label{volume of tubular neighborhood} Let $(L,c_L)$ be a real Hermitian ample line bundle over a real algebraic variety $(X,c_X)$ of dimension $n$. Then there exists a positive constant $c$, such that, for any sequence $r_d$ verifying $r_d\leq cd^{-2n}$, one has 
$$\mu_d\{s\in\R H^0(X,L^d), \mathrm{dist}_{\R\Delta_d}(s)\leq r_d\norm{s}_{\mathcal{L}^2}\}\leq O(r_dd^{2n}).$$
Here, $\mu_d$ is the Gaussian probability measure defined in Equation \eqref{gaussian measure}.
\end{lemma}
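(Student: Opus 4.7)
The plan is to combine the polynomial degree bound for the discriminant (Lemma~\ref{degree of discriminant}) with Bernstein's inequality on the unit sphere to reduce the problem to a polynomial sublevel-set estimate.

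First, I would pass to the unit sphere. The set
$\mathcal{T}_d:=\{s\in\R H^0(X,L^d):\mathrm{dist}_{\R\Delta_d}(s)\leq r_d\norm{s}_{\mathcal{L}^2}\}$
is a real cone, so by Remark~\ref{fubinistudy} the Gaussian measure $\mu_d(\mathcal{T}_d)$ equals the uniform probability measure on $S_d=\{s:\norm{s}_{\mathcal{L}^2}=1\}$ of the trace $\mathcal{T}_d\cap S_d$, which is the $r_d$-tubular neighborhood of $S\Delta_d$ in $S_d$. Lemma~\ref{degree of discriminant} furnishes a real homogeneous polynomial $Q_d$ of degree $D_d=(n+1)\int_X c_1(L)^n\, d^n+O(d^{n-1})=O(d^n)$ vanishing on $\R\Delta_d$, so this tube is contained in the $r_d$-tube of the algebraic hypersurface $\{Q_d=0\}\cap S_d$.

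Next, I would apply Bernstein's inequality on the sphere: for any homogeneous polynomial $Q$ of degree $D$ on $\R^N$,
$$
\norm{\nabla_{S^{N-1}}Q}_{L^\infty(S^{N-1})}\leq D\,\norm{Q}_{L^\infty(S^{N-1})}.
$$
Integrating this inequality along a minimizing geodesic of length $\leq r_d$ from a point $s$ in the tube to its nearest point of $\{Q_d=0\}\cap S_d$ yields $|Q_d(s)|\leq r_d D_d\,\norm{Q_d}_{L^\infty(S_d)}$, so the tube is contained in the sublevel set $A_d:=\{s\in S_d:|Q_d(s)|\leq r_d D_d\norm{Q_d}_{L^\infty(S_d)}\}$. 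It thus remains to estimate the uniform measure of $A_d$.

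This last bound would be obtained by a Crofton-type argument: any great circle of $S_d$ meets the real algebraic hypersurface $\{Q_d=0\}\cap S_d$ in at most $2D_d$ points, so its codimension-one measure is $O(D_d)$ times the reference sphere measure. Converting this into a linear-in-$r_d$ estimate for the sublevel set --- via coarea, or via a direct tube formula valid at the scale $r_d D_d^2\leq c$ guaranteed by the hypothesis $r_d\leq cd^{-2n}$ --- then yields $\mathrm{vol}(A_d)\leq C r_d D_d\cdot D_d=O(r_d D_d^2)=O(r_d d^{2n})$, as required. The main obstacle is precisely this final conversion: the coarea formula naively produces an integrand $1/|\nabla Q_d|$ that can blow up on the critical locus of $Q_d$, so one must either perturb $Q_d$ to make $|\nabla Q_d|$ bounded below on $A_d$, or use a direct tubular-neighborhood bound for the algebraic hypersurface $\{Q_d=0\}\cap S_d$ that bypasses the coarea altogether. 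The loss incurred in this step is exactly what produces the extra factor $D_d$, and hence the exponent $2n$ rather than the sharper $n$.
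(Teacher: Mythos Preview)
Your reduction to the unit sphere and your invocation of Lemma~\ref{degree of discriminant} are exactly what the paper does. The divergence is at the final step: rather than passing through Bernstein's inequality to a sublevel set and then attempting a Crofton/coarea argument, the paper simply cites \cite[Theorem~21.1]{burcondition} (B\"urgisser--Cucker, \emph{Condition}), which directly bounds the normalized volume of the $r$-tube around a degree-$D$ real algebraic hypersurface in $S^{N-1}$ by a constant times $D\,N\,r$, valid for $r$ below a threshold of order $(DN)^{-1}$. With $D=D_d=O(d^{n})$ and $N=N_d=\dim\R H^0(X,L^d)=O(d^{n})$ (Riemann--Roch), this gives $O(r_d d^{2n})$ immediately, and the threshold condition is precisely what produces the hypothesis $r_d\leq c d^{-2n}$.

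Your route is essentially an attempt to reprove that black box, and the obstacle you flag is genuine: the coarea integrand $|\nabla Q_d|^{-1}$ is not controlled, so the conversion from ``codimension-one volume $O(D_d)$'' to ``tube volume $O(D_d r_d)$'' cannot be closed this way. The mechanism behind the cited result is not coarea but Weyl's tube formula for real algebraic sets, in which the higher-order curvature integrals are controlled by integral-geometric degree bounds; your Bernstein detour to a sublevel set is then unnecessary, since one bounds the tube of $\{Q_d=0\}$ directly. Note also that the correct factor is $N_d\cdot D_d$ (ambient dimension times degree), not $D_d^2$; the two happen to coincide in order here, but your account of where the ``extra factor $D_d$'' comes from is misattributed.
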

\begin{proof}
We start by a standard remark about Gaussian measures on Euclidian spaces: let us denote by $S_d$ the unit sphere in $\R H^0(X,L^d)$, and by $\nu_d$ the probability measure induced by its volume form (i.e. for any $A\subset S_d$, $\nu_d(A)=\Vol(A)\Vol(S_d)^{-1}$), then the Gaussian measure of every cone $C_d$ in $\R H^0(X,L^d)$ equals $\nu_d(C_d\cap S_d)$. We apply this remark to the tubular conical neighborhood $C_d=\{\mathrm{dist}_{\R\Delta_d}(s)\leq r_d\norm{s}_{\mathcal{L}^2}\}$ and obtain that its Gaussian measure equals $\nu_d\{s\in S_d, \mathrm{dist}(s,S\Delta_d)\leq r_d\}$, where  $S\Delta_d$ denote the trace of the discriminant on $S_d$. By Lemma \ref{degree of discriminant}, we have that, for $d$ large enough, there exists a polynomial of degree bounded by $2(n+1)\int_{X}c_1(X)^nd^n$  whose zero locus contains $S\Delta_d$. We are then in the hypothesis of \cite[Theorem 21.1]{burcondition} which gives us
\begin{equation}\label{numeasure}
\nu_d\{s\in S_d, \mathrm{dist}(s,S\Delta_d)\leq r_d\}\leq cN_dd^nr_d
\end{equation}
for some constant $c>0$, where $N_d=\dim \R H^0(X,L^d)$. By Riemann-Roch Theorem, we have that $N_d=O(d^n)$, so that the right-hand side of \eqref{numeasure} is $O(r_dd^{2n})$.
The lemma then follows from the fact that the $\nu_d$-measure of the set $\{s\in S_d, \mathrm{dist}(s,S\Delta_d)\leq r_d\}$ appearing in \eqref{numeasure} equals the Gaussian measure of its cone. This implies 
$$\mu_d\{s\in\R H^0(X,L^d), \mathrm{dist}_{\R\Delta_d}(s)\leq r_d\norm{s}_{\mathcal{L}^2}\}\leq O(r_dd^{2n}),$$
which concludes the proof.
\end{proof}

\section{Proof of the main results}\label{SecProof}
In this section, we prove our main results, namely Theorems \ref{theorem rarefaction}, \ref{theorem exponential rarefaction} and \ref{theorem approximation}. 
We will use the notations of the previous sections, in particular we consider an ample Hermitian real holomorphic line bundle $(L,c_L,h)$ over a real algebraic variety $(X,c_X)$ and we denote by $\mu_d$ the Gaussian measure on $\R H^0(X,L^d)$ defined in Equation \eqref{gaussian measure}. 
\begin{conv}\label{orthogonal component}  Let $\sigma\in \R H^0(X,L^k)$ be a section given by Proposition \ref{existance of sigma}, for some fixed $k$ large enough. Let $d$ and $\ell$ be two positive integers  and let $\R H_{d,\sigma^\ell}$ be as in Definition \ref{definitionvanishing}. For any real section $s\in\R H^0(X,L^d)$ there exists an unique decomposition $s=s_{\sigma^{\ell}}^{\perp}+s_{\sigma^{\ell}}^{0}$ with $s_{\sigma^{\ell}}^{0}\in \R H_{d,\sigma^\ell}$ and $s_{\sigma^{\ell}}^{\perp}\in \R H^{\perp}_{d,\sigma^\ell}$  (the orthogonal is with respect to the $\mathcal{L}^2$-scalar product defined in Equation \eqref{l2 scalar}). 
\end{conv}
\begin{prop}\label{estimatepartial} Let $\sigma\in \R H^0(X,L^k)$ be a section given by Proposition \ref{existance of sigma}, for some fixed $k$ large enough. The following holds.
\begin{enumerate}
 \item There exists a positive real number $t_0$ such that for any $t\in(0,t_0)$ the following happens. Let $C>0$ and $r\in\mathbb{N}$. For any $w_d\geq Cd^{-r}$, there exists  $d_0\in\mathbb{N}$, such that for any $d\geq d_0$, we have  $$\mu_d\{s\in\R H^0(X,L^d), \norm{s_{\sigma^{\lfloor t d\rfloor}}^{\perp}}_{\mathcal{C}^1(\R X)}\geq w_d\norm{s}_{\mathcal{L}^2}\}=0.$$
 \item There exist two positive constants  $c_1$ and $c_2$ such that, for any sequence $w_d\geq c_1e^{-c_2\sqrt{d}\log d}$, we have 
  $$\mu_d\{s\in\R H^0(X,L^d), \norm{s_{\sigma}^{\perp}}_{\mathcal{C}^1(\R X)}\geq w_d\norm{s}_{\mathcal{L}^2}\}=0.$$
  If the real Hermitian metric on $L$ is  analytic, then the last estimate is true for any sequence $w_d\geq c_1e^{-c_2d}$.
\end{enumerate} 
Here, $s_{\sigma}^{\perp}$ and $s_{\sigma^{\lfloor t d\rfloor}}^{\perp}$ are given by Notation \ref{orthogonal component}. 
\end{prop}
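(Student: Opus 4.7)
The plan is to deduce Proposition~\ref{estimatepartial} directly from the uniform bounds already proved in Section~\ref{secbergmankernels} (namely Propositions~\ref{partialbergman} and~\ref{logberg}), combined with the trivial observation that the orthogonal projection onto $\R H_{d,\sigma^\ell}^\perp$ is a contraction with respect to the $\mathcal{L}^2$-norm. In fact, I expect to prove something stronger than measure zero: for $d$ large enough the sets in the statement are \emph{empty}, so their Gaussian measure is automatically $0$.

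For the first assertion, I would fix $t_0$ as in Proposition~\ref{partialbergman} and take $t\in(0,t_0)$. Given $s\in\R H^0(X,L^d)$ and its decomposition $s=s_{\sigma^{\lfloor td\rfloor}}^{\perp}+s_{\sigma^{\lfloor td\rfloor}}^{0}$ from Notation~\ref{orthogonal component}, I would observe that $\norm{s_{\sigma^{\lfloor td\rfloor}}^{\perp}}_{\mathcal{L}^2}\leq \norm{s}_{\mathcal{L}^2}$. If $s_{\sigma^{\lfloor td\rfloor}}^{\perp}\neq 0$, then $s_{\sigma^{\lfloor td\rfloor}}^{\perp}/\norm{s_{\sigma^{\lfloor td\rfloor}}^{\perp}}_{\mathcal{L}^2}$ is a unit vector in $\R H_{d,\sigma^{\lfloor td\rfloor}}^\perp$, so Proposition~\ref{partialbergman} gives, for every $r'\in\mathbb{N}$, a constant $C_{r'}>0$ with
\[
\norm{s_{\sigma^{\lfloor td\rfloor}}^{\perp}}_{\mathcal{C}^1(\R X)}\leq C_{r'}d^{-r'}\,\norm{s_{\sigma^{\lfloor td\rfloor}}^{\perp}}_{\mathcal{L}^2}\leq C_{r'}d^{-r'}\,\norm{s}_{\mathcal{L}^2}.
\]
Choosing $r'=r+1$, for any $w_d\geq Cd^{-r}$ there is $d_0$ (depending only on $C$, $r$, $C_{r+1}$) such that $C_{r+1}d^{-r-1}<w_d$ for $d\geq d_0$. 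Hence the condition $\norm{s_{\sigma^{\lfloor td\rfloor}}^{\perp}}_{\mathcal{C}^1(\R X)}\geq w_d\norm{s}_{\mathcal{L}^2}$ is never realized, the set is empty, and its $\mu_d$-measure is $0$.

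The second assertion follows by the same argument with Proposition~\ref{logberg} replacing Proposition~\ref{partialbergman}: taking $c_2$ to be (any positive constant strictly smaller than) the constant $c$ appearing there, one has, for every $s\in\R H^0(X,L^d)$,
\[
\norm{s_{\sigma}^{\perp}}_{\mathcal{C}^1(\R X)}\leq C\,e^{-c\sqrt{d}\log d}\,\norm{s}_{\mathcal{L}^2}
\]
in the general case, and analogously $Ce^{-cd}\norm{s}_{\mathcal{L}^2}$ in the analytic case. For any sequence $w_d\geq c_1e^{-c_2\sqrt{d}\log d}$ with $c_2<c$, the right-hand side is eventually strictly smaller than $w_d\norm{s}_{\mathcal{L}^2}$, and after a finite adjustment of $c_1$ we can force this to hold for \emph{all} $d$, so the set in question is again empty. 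The analytic case is identical with $e^{-cd}$ in place of $e^{-c\sqrt{d}\log d}$.

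Honestly, there is no real obstacle here: the entire technical content sits in Propositions~\ref{partialbergman} and~\ref{logberg} (i.e.\ in the partial and logarithmic Bergman kernel estimates of~\cite{partialbergmankernel,logbergman}), and the only care needed is the bookkeeping between the uniform $\mathcal{C}^1$-bound on unit vectors of $\R H_{d,\sigma^\ell}^\perp$ and the bound on $\norm{s_{\sigma^\ell}^\perp}_{\mathcal{C}^1(\R X)}$ in terms of $\norm{s}_{\mathcal{L}^2}$, which is handled by the contraction property of the orthogonal projection.
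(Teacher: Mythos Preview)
Your proof is correct and follows essentially the same approach as the paper: both arguments use the uniform $\mathcal{C}^1$-bounds from Propositions~\ref{partialbergman} and~\ref{logberg} together with the contraction property $\norm{s_{\sigma^\ell}^\perp}_{\mathcal{L}^2}\leq\norm{s}_{\mathcal{L}^2}$ to show that the set in question is actually \emph{empty} for $d$ large enough. The only cosmetic difference is that the paper first reduces to the unit sphere via the cone structure of the set (so that $\norm{s}_{\mathcal{L}^2}=1$), whereas you argue directly in $\R H^0(X,L^d)$; the content is identical.
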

\begin{proof}
Let us prove point \textit{(1)}. Let us fix $t_0$ given by Proposition \ref{partialbergman} and take $t<t_0$.
Let us denote by $S_d$ the unit sphere in $\R H^0(X,L^d)$, and by $\nu_d$ the probability measure induced by its volume form (i.e. for any $A\subset S_d$, $\nu_d(A)=\Vol(A)\Vol(S_d)^{-1}$). Then, the Gaussian measure of every cone $C_d$ in $\R H^0(X,L^d)$ equals $\nu_d(C_d\cap S_d)$. We apply this remark to the cone we are interested in, that is $C_d=\big\{s\in\R H^0(X,L^d), \norm{s_{\sigma^{\lfloor t d\rfloor}}^{\perp}}_{\mathcal{C}^1(\R X)}\geq w_d\norm{s}_{\mathcal{L}^2}\big\}$, and obtain 
\begin{equation}\label{comparaisonofmeas}
\mu_d\big\{s\in\R H^0(X,L^d), \norm{s_{\sigma^{\lfloor t d\rfloor}}^{\perp}}_{\mathcal{C}^1(\R X)}\geq w_d\norm{s}_{\mathcal{L}^2}\big\}=\nu_d\big\{s\in S_d, \norm{s_{\sigma^{\lfloor t d\rfloor}}^{\perp}}_{\mathcal{C}^1(\R X)}\geq w_d\big\}.
\end{equation}
Now, by Proposition \ref{partialbergman}, as $\norm{s^{\perp}_{\sigma^{\lfloor t d\rfloor}}}_{\mathcal{L}^2}\leq 1$, there exists a  constant $c_r>0$ such that $\norm{s_{\sigma^{\lfloor t d\rfloor}}^{\perp}}_{\mathcal{C}^1(\R X)}\leq c_r d^{-r-1}$, which is strictly smaller than $w_d$, for $d$ large enough. We then obtain $$\nu_d\big\{s\in S_d, \norm{s_{\sigma^{\lfloor t d\rfloor}}^{\perp}}_{\mathcal{C}^1(\R X)}\geq w_d\big\}=0,$$ which, together with \eqref{comparaisonofmeas}, proves  point \textit{(1)} of the result.
 The proof of  point \textit{(2)} follows the same lines, using Proposition \ref{logberg} instead of Proposition \ref{partialbergman}.
\end{proof}

\begin{lemma}\label{isotopy}  There exists a positive integer $d_0$ such that for any $d\geq d_0$ and any real section $s\in\R H^0(X,L^d)\setminus \R\Delta_d$, the following happens. For any real global section $s'\in\R H^0(X,L^d)$ such that $$\norm{s-s'}_{\mathcal{C}^1(\R X)}< \frac{d^{n/2}}{4\pi^{n/2}}\mathrm{dist}_{\R\Delta_d}(s),$$ we have that the pairs $(\R X,\R Z_s)$ and $(\R X,\R Z_{s'})$ are isotopic.
\end{lemma}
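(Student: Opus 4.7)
The plan is to connect $s$ to $s'$ by the affine family $s_t := (1-t)s + ts'$, $t\in[0,1]$, and apply Thom's Isotopy Lemma. Specifically, if I can show that $s_t\notin \R\Delta_d$ for every $t\in[0,1]$, then each $\R Z_{s_t}$ is a smooth compact hypersurface of $\R X$ varying smoothly with $t$; compactness of $\R X$ combined with Thom's Isotopy Lemma will then produce an ambient isotopy of $\R X$ carrying $\R Z_s$ onto $\R Z_{s'}$. The whole content of the lemma therefore reduces to showing that the segment $\{s_t\}_{t\in[0,1]}$ does not meet the discriminant.

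I would prove this by contradiction. Assume $s_{t_0}\in\R\Delta_d$ for some $t_0\in[0,1]$; necessarily $t_0>0$ since $s\notin\R\Delta_d$ by hypothesis. Pick $x_0\in\R X$ such that $s_{t_0}(x_0)=0$ and $\nabla s_{t_0}(x_0)=0$. Rewriting these identities as $s(x_0) = t_0\bigl(s(x_0)-s'(x_0)\bigr)$ and $\nabla s(x_0) = t_0\bigl(\nabla s(x_0)-\nabla s'(x_0)\bigr)$, and using $t_0\leq 1$ together with the definition of the $\mathcal{C}^1(\R X)$-norm, one immediately obtains the pointwise bounds
$$\norm{s(x_0)}_{h^d} \leq \norm{s-s'}_{\mathcal{C}^1(\R X)}, \qquad \norm{\nabla s(x_0)}_{h^d} \leq \norm{s-s'}_{\mathcal{C}^1(\R X)}.$$

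Next I would feed these bounds into the asymptotic formula for $\mathrm{dist}_{\R\Delta_d}(s)$ of Lemma \ref{distance to the discriminant}, using $x_0$ as a competitor for the minimum over $\R X$. This gives
$$\mathrm{dist}_{\R\Delta_d}(s) \leq \bigl(d^{-n}+d^{-n-1}\bigr)^{1/2}\norm{s-s'}_{\mathcal{C}^1(\R X)}\bigl(\pi^{n/2}+O(1/d)\bigr) = \frac{\pi^{n/2}}{d^{n/2}}\bigl(1+O(1/d)\bigr)\norm{s-s'}_{\mathcal{C}^1(\R X)}.$$
Choosing $d_0$ large enough so that $1+O(1/d)<4$ for every $d\geq d_0$ directly contradicts the hypothesis $\norm{s-s'}_{\mathcal{C}^1(\R X)}<\frac{d^{n/2}}{4\pi^{n/2}}\mathrm{dist}_{\R\Delta_d}(s)$ and finishes the argument.

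The hard part is really only the bookkeeping of the numerical constant: the factor $1/(4\pi^{n/2})$ in the hypothesis is tailored precisely to absorb the leading constant $\pi^{n/2}$ produced by Lemma \ref{distance to the discriminant}, together with the subleading $O(1/d)$ correction originating from the on-diagonal Bergman-kernel expansion underlying that lemma. Beyond this, no further analytic or topological input is needed: Thom's Isotopy Lemma, the compactness of $\R X$, and the pointwise estimate above do all the work.
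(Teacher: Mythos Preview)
Your argument is correct and follows essentially the same route as the paper. The paper first uses Lemma~\ref{distance to the discriminant} to convert the hypothesis into $\norm{s-s'}_{\mathcal{C}^1(\R X)}<\tfrac{1}{2}\delta(s)$ with $\delta(s)=\min_{x\in\R X}\big(\norm{s(x)}^2_{h^d}+d^{-1}\norm{\nabla s(x)}^2_{h^d}\big)^{1/2}$, and then cites \cite[Proposition~3]{diatta} as a black box for the conclusion; that proposition is precisely the linear-interpolation-plus-Thom-isotopy argument you have written out explicitly. So the only difference is that you unpack the cited step rather than invoking it, which makes your version self-contained.
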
 
\begin{proof}
By Lemma \ref{distance to the discriminant}, there exists $d_0$ such that for any $d\geq d_0$ one has  $$\mathrm{dist}_{\R\Delta_d}(s)<2\pi^{n/2}\min_{x\in\R X}\bigg(\frac{\norm{s(x)}^2_{h^d}}{d^n}+\frac{\norm{\nabla s(x)}^2_{h^d}}{d^{n+1}}\bigg)^{1/2}.$$ In particular, for any  $d\geq d_0$, the inequality  $\norm{s-s'}_{\mathcal{C}^1(\R X)}< \frac{d^{n/2}}{4\pi^{n/2}}\mathrm{dist}_{\R\Delta_d}(s)$ implies
\begin{equation}\label{equationindistance}
\norm{s-s'}_{\mathcal{C}^1(\R X)}< \frac{1}{2}\min_{x\in\R X}\bigg(\norm{s(x)}^2_{h^d}+\frac{\norm{\nabla s(x)}^2_{h^d}}{d}\bigg)^{1/2}.
\end{equation} 
Now, denoting $\delta(s):=\min_{x\in\R X}\bigg(\norm{s(x)}^2_{h^d}+\frac{\norm{\nabla s(x)}^2_{h^d}}{d}\bigg)^{1/2}$ and following the lines of \cite[Proposition 3]{diatta}, we have that the inequality \eqref{equationindistance} implies that the pairs $(\R X,\R Z_s)$ and $(\R X,\R Z_{s'})$ are isotopic.
\end{proof}
\begin{lemma}\label{rapiddecay} Let $\sigma\in \R H^0(X,L^k)$ be a section given by Proposition \ref{existance of sigma}, for some fixed $k$ large enough.  Then, we have the following estimates as $d\rightarrow\infty$.
\begin{enumerate}
\item There exists $t_0>0$ such that, for any $t\in (0,t_0)$,  we have $$\mu_d\big\{s\in\R H^0(X,L^d), \norm{s_{\sigma^{\lfloor t d\rfloor}}^{\perp}}_{\mathcal{C}^1(\R X)}<\frac{d^{n/2}}{4\pi^{n/2}}\mathrm{dist}_{\R\Delta_d}(s)\big\}\geq 1-O(d^{-\infty}).$$
\item There exists a positive $c>0$ such that$$\mu_d\big\{s\in\R H^0(X,L^d), \norm{s_{\sigma}^{\perp}}_{\mathcal{C}^1(\R X)}<\frac{d^{n/2}}{4\pi^{n/2}}\mathrm{dist}_{\R\Delta_d}(s)\big\}\geq 1-O(e^{-c\sqrt{d}\log d}).$$ 
Moreover, if the real Hermitian metric on $L$ is  analytic, then  the last measure is even bigger then $1-O(e^{-cd})$.
\end{enumerate}
 
\end{lemma}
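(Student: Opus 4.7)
The plan is to prove both assertions by an elementary union bound, trading off the two competing estimates already developed in the paper: Proposition \ref{estimatepartial}, which says that $\|s_\sigma^\perp\|_{\mathcal{C}^1(\R X)}$ and $\|s^\perp_{\sigma^{\lfloor td\rfloor}}\|_{\mathcal{C}^1(\R X)}$ are tiny with overwhelming probability, and Lemma \ref{volume of tubular neighborhood}, which says that $\mathrm{dist}_{\R\Delta_d}(s)$ is not too small outside a set of small measure. Fixing an auxiliary threshold $w_d$, I decompose the bad event
\[
E_d=\Bigl\{s\in\R H^0(X,L^d):\|s_\sigma^\perp\|_{\mathcal{C}^1(\R X)}\geq \tfrac{d^{n/2}}{4\pi^{n/2}}\mathrm{dist}_{\R\Delta_d}(s)\Bigr\}
\]
(and the analogue with $\sigma$ replaced by $\sigma^{\lfloor td\rfloor}$) as $E_d\subset A_d\cup B_d$, where
\[
A_d=\bigl\{\|s_\sigma^\perp\|_{\mathcal{C}^1(\R X)}\geq w_d\|s\|_{\mathcal{L}^2}\bigr\},\qquad
B_d=\bigl\{\mathrm{dist}_{\R\Delta_d}(s)\leq \tfrac{4\pi^{n/2}w_d}{d^{n/2}}\|s\|_{\mathcal{L}^2}\bigr\}.
\]
Indeed, if $s\notin A_d$ then $\|s_\sigma^\perp\|_{\mathcal{C}^1(\R X)}<w_d\|s\|_{\mathcal{L}^2}$, and if moreover $s\notin B_d$ then $\frac{d^{n/2}}{4\pi^{n/2}}\mathrm{dist}_{\R\Delta_d}(s)>w_d\|s\|_{\mathcal{L}^2}$, so $s\notin E_d$.

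For assertion \textit{(1)}, I fix $t\in(0,t_0)$ with $t_0$ as in Proposition \ref{estimatepartial}\,(1). Given any $k\in\mathbb{N}$, I set $w_d:=d^{-k-3n/2}$. Since $w_d$ decays polynomially, Proposition \ref{estimatepartial}\,(1) applies with this choice of $r=k+3n/2$ and gives $\mu_d(A_d)=0$ for $d$ large enough. On the other hand $r_d:=\frac{4\pi^{n/2}w_d}{d^{n/2}}$ is of order $d^{-k-2n}$, which for $d$ large satisfies the hypothesis $r_d\leq cd^{-2n}$ of Lemma \ref{volume of tubular neighborhood}; that lemma then yields $\mu_d(B_d)\leq O(r_d d^{2n})=O(d^{-k})$. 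Combining, $\mu_d(E_d)\leq O(d^{-k})$. Since $k$ was arbitrary, this is the desired $O(d^{-\infty})$ bound.

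For assertion \textit{(2)}, I take instead $w_d:=c_1 e^{-c_2\sqrt{d}\log d}$ with $c_1,c_2>0$ furnished by Proposition \ref{estimatepartial}\,(2); then $\mu_d(A_d)=0$. The quantity $r_d=\frac{4\pi^{n/2}w_d}{d^{n/2}}$ is exponentially small, in particular smaller than $cd^{-2n}$ for $d$ large, so Lemma \ref{volume of tubular neighborhood} gives $\mu_d(B_d)\leq O(r_d d^{2n})=O(d^{3n/2}e^{-c_2\sqrt{d}\log d})\leq O(e^{-c\sqrt{d}\log d})$ for any $0<c<c_2$, since the polynomial factor $d^{3n/2}$ is absorbed by the sub-exponential. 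In the analytic case one repeats the same argument with $w_d:=c_1 e^{-c_2 d}$, using the second half of Proposition \ref{estimatepartial}\,(2) instead, and obtains $\mu_d(E_d)\leq O(e^{-cd})$.

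The only subtlety, and the step that guides the whole proof, is the compatibility of the two thresholds: $w_d$ must be \emph{large enough} for Proposition \ref{estimatepartial} to force $\mu_d(A_d)=0$, yet \emph{small enough} so that $r_d=\frac{4\pi^{n/2}w_d}{d^{n/2}}$ lies within the $d^{-2n}$ window in which Lemma \ref{volume of tubular neighborhood} is valid. In both the polynomial and the (sub)exponential regime the gap between the two requirements is easily wide enough, which is why the final probability bound inherits almost verbatim the decay rate coming from the Bergman-kernel estimates in Propositions \ref{partialbergman} and \ref{logberg}; no further analytic input is needed.
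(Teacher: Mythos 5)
Your proof is correct and takes essentially the same route as the paper. You bound the bad event $E_d$ by a union $A_d\cup B_d$ (large $\mathcal{C}^1$-norm of the orthogonal component versus small distance to the discriminant), kill $A_d$ via Proposition \ref{estimatepartial}, control $B_d$ via Lemma \ref{volume of tubular neighborhood}, and tune the threshold $w_d$ so that both bounds apply simultaneously; the paper does exactly the same thing, choosing $r_d=C_1d^{-2n-m}$ and $w_d=\frac{d^{n/2}}{4\pi^{n/2}}r_d$, which is your choice up to the constant $C_1$. One cosmetic point: Proposition \ref{estimatepartial}\,(1) as stated requires $r\in\mathbb{N}$, so for odd $n$ you should take $w_d=d^{-k-\lceil 3n/2\rceil}$ (or simply note that $w_d\geq d^{-k-2n}$ for $d\geq 1$ already meets the hypothesis); this does not affect the rest of the argument.
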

\begin{proof}
First, remark that, by Proposition \ref{volume of tubular neighborhood}, for any $m\in\mathbb{N}$, setting $r_d=C_1d^{-2n-m}$, one has 
\begin{equation}\label{rapid1}
\mu_d\big\{s\in\R H^0(X,L^d), \mathrm{dist}_{\R\Delta_d}(s)> r_d\norm{s}_{\mathcal{L}^2}\big\}\geq 1-O(d^{-m}).
\end{equation}
Also, by  point \textit{(1)} of Proposition \ref{estimatepartial}, for any $t<t_0$, any integer $r$,   any sequence $w_d$ of the form $C_2d^{-r}$ and any  $d$ large enough, we have
\begin{equation}\label{rapid2}
\mu_d\big\{s\in\R H^0(X,L^d), \norm{s_{\sigma^{\lfloor t d\rfloor}}^{\perp}}_{\mathcal{C}^1(\R X)}< w_d\norm{s}_{\mathcal{L}^2}\big\}= 1.
\end{equation}
Putting together \eqref{rapid1} and \eqref{rapid2}, we have that, for any such sequences $r_d$ and $w_d$,
\begin{equation}\label{rapid3}
\mu_d\big\{s\in\R H^0(X,L^d), \norm{s_{\sigma^{\lfloor t d\rfloor}}^{\perp}}_{\mathcal{C}^1(\R X)}< \frac{w_d}{r_d}\mathrm{dist}_{\R\Delta_d}(s)\big\}\geq 1-O(d^{-m}).
\end{equation}
By choosing $w_d=\frac{d^{n/2}}{4\pi^{n/2}}r_d$, we then obtain that for any $m\in\mathbb{N}$
\begin{equation}\label{rapid3}
\mu_d\big\{s\in\R H^0(X,L^d), \norm{s_{\sigma^{\lfloor t d\rfloor}}^{\perp}}_{\mathcal{C}^1(\R X)}< \frac{d^{n/2}}{4\pi^{n/2}}\mathrm{dist}_{\R\Delta_d}(s)\big\}\geq 1-O(d^{-m})
\end{equation}
which proves the point \textit{(1)} of the proposition. Point \textit{(2)} of the proposition follows the same lines, using  point \textit{(2)} of Proposition \ref{estimatepartial} and   setting  $r_d=d^{-2n}e^{-c\sqrt{d}}$, where $c$ is given by point \textit{(2)} of Proposition \ref{estimatepartial}.
\end{proof}
We now prove Theorems \ref{theorem approximation}, \ref{theorem rarefaction} and \ref{theorem exponential rarefaction}.
\begin{proof}[Proof of Theorem \ref{theorem approximation}]
Let us start with the proof of point \textit{(1)} of the theorem. \\
Let $\sigma\in \R H^0(X,L^k)$ be a section given by Proposition \ref{existance of sigma}, for some fixed $k$ large enough.
We want to prove that there exists $b_0<1$ such that for any $b> b_0$, the measure 
\begin{equation}\label{measuretoestimate}
\mu_d\big\{s\in\R H^0(X,L^d), \exists\hspace{0.5mm} s'\in\R H^0(X,L^{\lfloor bd\rfloor}) \hspace{1mm} \textrm{such that} \hspace{1mm} (\R X,\R Z_s)\sim (\R X,\R Z_{s'}) \big\}
\end{equation}
equals $1-O(d^{-\infty})$, as $d\rightarrow\infty$, where $(\R X,\R Z_s)\sim (\R X,\R Z_{s'})$ means there the two pairs are isotopic. Let us consider $t_0>0$ given by point \textit{(1)} of Proposition \ref{rapiddecay} and set $b_0=1-kt_0$. Then, for any  $b> b_0,$ there exists $t< t_0$, such that  $\lfloor bd\rfloor\geq d-k\lfloor t d\rfloor$ holds. Let us fix such $b$ and $t$.  For any $s\in \R H^0(X,L^d)$, let us write the orthogonal decomposition $s=s_{\sigma^{\lfloor t d\rfloor}}^{\perp}+s_{\sigma^{\lfloor t d\rfloor}}^{0}$ given by Notation \ref{orthogonal component}.
By Lemma \ref{isotopy}, if the $\mathcal{C}^1(\R X)$-norm of $s_{\sigma^{\lfloor t d\rfloor}}$ is smaller than $\frac{d^{n/2}}{4\pi^{n/2}}\mathrm{dist}_{\R\Delta_d}(s)$, then the pairs $(\R X,\R Z_s)$ and $(\R X,\R Z_{s_{\sigma^{\lfloor t d\rfloor}}^{0}})$ are isotopic. This implies that the measure \eqref{measuretoestimate} is bigger than the Gaussian measure of the set
\begin{equation}\label{measuretoestimate2}
\bigg\{s\in\R H^0(X,L^d), \norm{s_{\sigma^{\lfloor t d\rfloor}}^{\perp}}_{\mathcal{C}^1(\R X)}<\frac{d^{n/2}}{4\pi^{n/2}}\mathrm{dist}_{\R\Delta_d}(s) \bigg\}
\end{equation}
which, in turn, by the point \textit{(1)} of Proposition \ref{rapiddecay}, is bigger than $1-O(d^{-\infty})$. We have then proved that the pair $(\R X,\R Z_s)$ is isotopic to the pair $(\R X,\R Z_{s_{\sigma^{\lfloor t d\rfloor}}^{0}})$ with probability $1-O(d^{-\infty})$. Now, $s_{\sigma^{\lfloor t d\rfloor}}^{0}\in \R H_{d,\sigma^{\lfloor t d\rfloor}}$ which implies, by Proposition \ref{vanishing order}, that there exists $s'\in \R H^0(X,L^{d-k\lfloor t d\rfloor})$ such that $s_{\sigma^{\lfloor t d\rfloor}}^{0}=\sigma^{\lfloor t d\rfloor}\otimes s'$. Assertion \textit{(1)} of the theorem then follows from the fact that  the real zero locus of $s_{\sigma^{\lfloor t d\rfloor}}^{0}$ coincides with the real zero locus of $s'$. Indeed,  the real zero locus of $s_{\sigma^{\lfloor t d\rfloor}}^{0}$ equals $\R Z_\sigma\cup\R Z_{s'}$ and this is equal to $\R Z_{s'}$, because  $\R Z_\sigma=\emptyset$.

Assertion \textit{(2)} of the theorem follows the same lines,  using the orthogonal decomposition  $s=s_{\sigma}^{\perp}+s_{\sigma}^{0}$ and  Assertion \textit{(2)} of Proposition \ref{rapiddecay}.
\end{proof}

\begin{proof}[Proof of Theorems \ref{theorem rarefaction}]
Recall that we want to prove that there exists $a_0<v(L)$ such that, for any   $a>a_0$, we have 
$$\mu_d\{s\in\R H^0(X,L^d), b_*(\R Z_s) < ad^n\}=1-O(d^{-\infty})$$
as $d\rightarrow\infty$.
Let $a_0:=\int_Xc_1(L)^n b_0^n$, where $b_0$ is given by point \textit{(1)} of Theorem \ref{theorem approximation} and let $a>a_0$. 

By point \textit{(1)} of Theorem \ref{theorem approximation}, for any $b>b_0$, the real zero locus of a global section $s$ is diffeomorphic to the real zero locus of a global section $s'$ of $L^{\lfloor bd\rfloor}$  with probability $1-O(d^{-\infty})$. Now, by Smith-Thom inequality (see Equation \eqref{smith-thom}), the total Betti number $b_*(\R Z_{s'})$ of the real zero locus a generic section $s'$ of $L^{\lfloor bd\rfloor}$ is smaller or equal than $b_*(Z_{s'})$, which, by \cite[Lemma 3]{gw1}, has the asymptotic $b_*(Z_{s'})= \int_Xc_1(L)^n\big(\lfloor bd\rfloor\big)^n+O(d^{n-1})$. In particular, with probability $1-O(d^{-\infty})$, the  total Betti number $b_*(\R Z_{s})$ of the real zero locus of a section $s$ of $L^d$ is smaller  than $\int_Xc_1(L)^n\big(\lfloor bd\rfloor\big)^n+\epsilon d^{n}$, for any $\epsilon>0$, as $d\rightarrow\infty$. Choosing $\epsilon$ small enough, we can find $1>b> b_0$ such that $\int_Xc_1(L)^n b^n+\epsilon<a$, which implies  the result.
\end{proof}
\begin{proof}[Proof of Theorem \ref{theorem exponential rarefaction}]
We want to prove that, for any  $a>0$, there exists $c>0$ such that 
$$\mu_d\big\{s\in\R H^0(X,L^d), b_*(\R Z_s)<b_*(Z_s)-ad^{n-1}\big\}\geq 1- O(e^{-c\sqrt{d}\log d})$$
 as $d\rightarrow\infty$ (and also the similar estimate with $1-O(e^{-cd})$ in the right-hand side, if the real Hermitian metric on $L$ is  analytic).

Remark that the total Betti number of the zero locus $Z_s$ a generic section $s$ of $L^{d}$ has the asymptotic 
\begin{equation}\label{tsd}
b_*(Z_s)=\int_{X}c_1(L)^nd^{n}-\int_{X}c_1(L)^{n-1}\wedge c_{1}(X)d^{n-1}+O(d^{n-2}),
\end{equation}
 as $d\rightarrow\infty$, where $c_1(X)$ is the first Chern class of $X$. (This is a classical estimate which follows from the adjunction formula and Lefschetz hyperplane section theorem. A  proof of this estimates  follows the lines of \cite[Lemma 3]{gw1}, by just taking care of the second term of the asymptotic which can be derived using \cite[Lemma 2]{gw1}).\\
Similarly, for any $k\in\mathbb{N}$, the zero locus of a generic section $s'$ of $L^{d-k}$ is such that 
\begin{equation}\label{tsdk}
b_*(Z_{s'})=\int_{X}c_1(L)^nd^{n}-\bigg(\int_{X}c_1(L)^{n-1}\wedge c_{1}(X)+nk\int_{X}c_1(L)^n\bigg)d^{n-1}+O(d^{n-2}),
\end{equation}
 as $d\rightarrow\infty$. Let us choose once for all an integer $k$ big enough such that $nk\int_{X}c_1(L)^n>a$. In particular,  by \eqref{tsd} and \eqref{tsdk}, the zero loci of generic sections $s$ and $s'$ of $L^d$ and $L^{d-k}$ are such that 
\begin{equation}\label{bettiinequality}
b_*(Z_{s'})<b_*(Z_s)-ad^{n-1},
\end{equation}
$d\rightarrow\infty$.
Now, by the Assertion \textit{(2)} of Theorem \ref{theorem approximation}, the probability that the real zero locus of a section $s$ of $L^d$ is diffeomorphic to the real zero locus of a section $s'$ of $L^{d-k}$ equals $1-O(e^{-c\sqrt{d}\log d})$ (or $1-O(e^{-cd})$, if the real Hermitian metric on $L$ is  analytic). By Smith-Thom inequality and by  \eqref{bettiinequality}, this implies that the probability of the event "the total Betti number of the real zero locus of a section $s$ of $L^d$ is smaller  than $b_*(Z_s)-ad^{n-1}$" is bigger or equal than $1-O(e^{-c\sqrt{d}\log d})$ (or $1-O(e^{-cd})$, if the real Hermitian metric on $L$ is  analytic), which proves the theorem.
\end{proof}
\section{Real curves in algebraic surfaces and depth of nests}\label{secnest}
In this section, we study the nests defined by a random real algebraic curve inside a real algebraic surface. We will focus in particular on two examples: the Hirzebruch surfaces (Section \ref{secHirze}) and the Del Pezzo surfaces (Section \ref{secDelPezzo}). The main results of the section (Theorems \ref{hirzethm} and \ref{delpezzothm})  say that, inside these surfaces, real algebraic curves with deep nests are rare, in the same spirit of Theorems \ref{theorem rarefaction} and \ref{theorem exponential rarefaction}.
In order to define what "deep nests" means in this context, we need to prove some bounds on the depth of nests (Propositions \ref{nesthirze} and \ref{nestdelpezzo}) which are well known to experts. These bounds play the same role that Smith-Thom inequality \eqref{smith-thom} plays for Theorems \ref{theorem rarefaction} and \ref{theorem exponential rarefaction}.
Theorems \ref{hirzethm} and \ref{delpezzothm} will also follows from our low degree approximation (Theorem \ref{theorem approximation}). 
\begin{defn} Let $S$ be a real algebraic surface and let $M$ be a connected component of the real part $\R S$. 
\begin{itemize}
\item An \textit{oval} in $M$ is an embedded circle $\mathbb{S}^1\hookrightarrow M$ which bounds a disk.  
\item Suppose that $M$ is not diffeomorphic to the sphere $\mathbb{S}^2$, then  two ovals form an \textit{injective pair} if one of the two oval is contained in the disk bounded by the other one. A \textit{nest} $N$ in $M$ is a collection of ovals with the property that each pair of ovals in the collection is an injective pair.

\item Suppose that  $M$ is diffeomorphic to  $\mathbb{S}^2$, then a \textit{nest} $N$ is a collection of ovals with the property that each connected component of $M\setminus N$ is either a disk or an annulus.

\item The number of ovals which form a nest is called \textit{depth} of the nest.

\end{itemize}
\end{defn}

\subsection{Nests in Hirzebruch surfaces}\label{secHirze} A \textit{Hirzebruch surface} $S$  is a compact complex surface which admits a holomorphic fibration over $\C P^1$ with fiber $\C P^1$. Given a Hirzebruch surface $S$, then there exists a positive integer $n$ such that $S$ is isomorphic to the projective bundle $P\big(\mathcal{O}_{\C P^1}(n)\oplus \mathcal{O}_{\C P^1}\big)$ over $\C P^1$ (see for example \cite{beauvilleSurface}). We will denote such a Hirzebruch surface by $\Sigma_n$.
Remark that a Hirzebruch surface is simply connected, that $\Sigma_0$ is isomorphic to $\C P^1\times\C P^1$ and $\Sigma_1$  to $\C P^2$ blown-up at a point. 

The Hirzebruch surface $\Sigma_n$ admits a natural real structure $c_n$, which we wix from now on, such that $\R\Sigma_n$ is diffeomorphic to a torus if $n$ is even and to a Klein bottle if $n$ is odd (see, for instance, \cite[Page  7]{welthese}).

 We denote by $F_n$ any fiber of the natural fibration $\Sigma_n\rightarrow \C P^1$  and by $B_n$ the section $P(\mathcal{O}_{\C P^1}(n)\oplus \{0\})$ of this fibration. Remark that $B_n$ is a real algebraic curve and $F_n$ can be chosen to be a real algebraic curve.
 The homology classes $[F_n]$ and $[B_n]$ form a basis of $H_2(\Sigma_n,\mathbb{Z})$, with $[F_n]\cdot [F_n]=0$, $[F_n]\cdot [B_n]=1$ and $[B_n]\cdot [B_n]=n$, where "$\cdot$" denotes the intersection product of $H_2(\Sigma_n,\mathbb{Z})$. 
 
 The second homology group $H_2(\Sigma_n,\mathbb{Z})$ can be identified with the Picard group of $\Sigma_n$. We say the an algebraic curve in $\Sigma_n$ realizes the class $(a,b)$ (or it is of bidegree $(a,b)$) if its homology class equals $a[F_n]+b[B_n]$. If $a,b>0$, then the divisor associated with the class $(a,b)$ is an ample divisor.
 \begin{prop}\label{nesthirze} Let $(\Sigma_n,c_n)$ be a real Hirzebruch surface and let $a,b$ be two positive integers.  Then, the depth of a nest of a real algebraic curve in the class $(a,b)$ is smaller or equal than $ b/2$.
 \end{prop}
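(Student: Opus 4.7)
The plan is to use a classical intersection-theoretic argument with a suitable real fiber of the natural fibration $\Sigma_n \to \C P^1$. Given a smooth real algebraic curve $C$ of bidegree $(a,b)$ and a nest $O_1, \dots, O_k$ inside $\R \Sigma_n$, the definition forces the ovals to be totally ordered by containment, so I may assume the bounding disks form a chain $D_1 \subset D_2 \subset \cdots \subset D_k$ with $\partial D_i = O_i$.

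First, I would pick a point $p$ in the interior of the innermost disk $D_1$ and let $\R F$ be the real fiber of $\Sigma_n \to \C P^1$ passing through $p$. Up to replacing $p$ by a nearby point of $D_1$, I may also assume that $\R F$ meets $\R C$ transversally, since the set of real fibers that are tangent to $\R C$ (or pass through a singular point of it) is finite.

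Second, I would observe that $\R F$ is a nontrivial simple closed curve in $\R \Sigma_n$. Indeed, the restriction of the fibration to the real loci gives a circle bundle $\R \Sigma_n \to \R P^1$, so $\R F$ is a generator of $H_1(\R \Sigma_n, \Z/2)$, regardless of whether $\R \Sigma_n$ is a torus (when $n$ is even) or a Klein bottle (when $n$ is odd). In particular, $\R F$ cannot be contained in any contractible subset of $\R \Sigma_n$. Since every disk $D_i$ is contractible, $\R F$ must enter and exit each $D_i$, and hence meets the boundary oval $O_i$ in at least two transverse points. As $p \in D_1 \subset D_2 \subset \cdots \subset D_k$, this gives
\[
|\R F \cap \R C| \;\geq\; 2k.
\]

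Third, the intersection form in $H_2(\Sigma_n,\Z)$ provides the matching upper bound:
\[
|\R F \cap \R C| \;\leq\; |F \cap C| \;=\; [F_n]\cdot(a[F_n]+b[B_n]) \;=\; b.
\]
Combining the two inequalities yields $2k \leq b$, which is the claimed bound $k \leq b/2$. The only slightly delicate step is the homological fact that the fiber class is nontrivial in $H_1(\R \Sigma_n,\Z/2)$; this is standard and follows directly from the explicit description of the real structure $c_n$ given in the paper (see \cite{welthese}) and the fact that $\R F$ is the fiber of an $S^1$-bundle over $S^1$.
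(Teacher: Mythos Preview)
Your proof is correct and follows essentially the same intersection-theoretic argument as the paper: intersect the nest with a real fiber $\R F$ through a point of the innermost disk, bound $|\R F\cap\R C|$ below by $2k$ (each oval is crossed at least twice) and above by $[F_n]\cdot[C]=b$ via B\'ezout. The only cosmetic difference is that the paper also selects a second point $q$ outside the outermost oval on the same fiber to force the crossings, whereas you deduce the same conclusion from the homological nontriviality of $\R F$ in $H_1(\R\Sigma_n,\Z/2)$; these are two phrasings of the same fact.
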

  \begin{proof}  
 Let $C$ be a real curve of bidegree $(a,b)$, with $a,b>0$. Let $N$ be a nest of $\R C$ of depth $l$ and choose a point $p$ inside the innermost oval and a point $q$ outside the outer oval of the nest. We can choose $p$ and $q$ so that they lie in the same fiber $F_n$ of the fibration $\Sigma_n\rightarrow \C P^1$ and such that real locus of the fiber $\R F_n$ intersects the nest transversally. The real locus of the fiber $\R F_n$ intersects each of the $l$ ovals of the nest in at least $2$ points, so that $\R F_n\cap N\geq 2l$. On the other hand, by Bezout's theorem, we have that $\R F_n\cap N\leq [F_n]\cdot [C]\leq  b$, hence the result.
 \end{proof}
 
 \begin{thm}\label{hirzethm} Let $(\Sigma_n,c_n)$ be a real Hirzebruch surface and  $a$ and $b$ be two positive integers. 
\begin{enumerate} 
\item There exists a positive $\beta_0<\lfloor b/2\rfloor$ such that for any $\beta\in (\beta_0,\lfloor b/2\rfloor)$,  the probability that a real algebraic curve of bidegree $(da,db)$ has a nest of depth $\geq \beta d$ is smaller or equal than $O(d^{-\infty})$.
\item For any $k\in\mathbb{N}$ there exists a positive constant $c$ such that the probability that  a real algebraic curve of bidegree $(da,db)$ has a nest of depth $\geq \lfloor db/2\rfloor-k$ is smaller or equal than $O(e^{-c\sqrt{d}\log d})$.
 \end{enumerate}
 \end{thm}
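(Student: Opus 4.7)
The plan is to adapt the argument used for Theorems \ref{theorem rarefaction} and \ref{theorem exponential rarefaction}, substituting the Smith-Thom inequality with Proposition \ref{nesthirze}. I would fix the ample real Hermitian holomorphic line bundle $L$ on $\Sigma_n$ whose Picard class is $(a,b)$ (which is ample precisely because $a,b>0$), equipped with a real Hermitian metric of positive curvature, so that real sections of $L^{\otimes d}$ correspond to real algebraic curves of bidegree $(da,db)$ and Theorem \ref{theorem approximation} applies. The central observation is that the isotopy of pairs $(\R\Sigma_n,\R Z_s)\sim(\R\Sigma_n,\R Z_{s'})$ provided by Theorem \ref{theorem approximation} is realized by a self-diffeomorphism of $\R\Sigma_n$ carrying $\R Z_s$ to $\R Z_{s'}$, which preserves the nesting relations between ovals and hence the depth of every nest.

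For assertion (1), I would take $b_0<1$ from Theorem \ref{theorem approximation}(1) applied to $L$ and set $\beta_0:=b_0\cdot b/2$, which is strictly smaller than $b/2$ and, in the generic case $b$ even, already strictly smaller than $\lfloor b/2\rfloor$ (otherwise one shrinks $\beta_0$ slightly below $\lfloor b/2\rfloor$ and argues with a $b'$ closer to $1$; see below). For $\beta\in(\beta_0,\lfloor b/2\rfloor)$, pick any $b'\in(b_0,1)$ with $b'b/2<\beta$, which is possible because $\beta>b_0b/2$. Theorem \ref{theorem approximation}(1) guarantees that with probability $1-O(d^{-\infty})$ the pair $(\R\Sigma_n,\R Z_s)$ is isotopic to the pair $(\R\Sigma_n,\R Z_{s'})$ of a real algebraic curve $\R Z_{s'}$ of bidegree $(\lfloor b'd\rfloor a,\lfloor b'd\rfloor b)$. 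Proposition \ref{nesthirze} bounds the depth of any nest of $\R Z_{s'}$ by $\lfloor b'd\rfloor\cdot b/2\leq b'db/2<\beta d$ for $d$ large, and since isotopy preserves nest depth the event ``depth $\geq\beta d$'' occurs with probability $O(d^{-\infty})$.

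For assertion (2), I would proceed analogously using Theorem \ref{theorem approximation}(2). Given $k\in\mathbb{N}$, choose an integer $k'$ large enough that $k'b/2>k+1$ (for instance $k'=2k+3$, irrespective of the parity of $b$). A short floor-function check then gives the deterministic inequality
\[
\lfloor(d-k')b/2\rfloor<\lfloor db/2\rfloor-k
\]
for $d$ large. Theorem \ref{theorem approximation}(2) yields $c>0$ such that, with probability at least $1-O(e^{-c\sqrt d\log d})$, the pair $(\R\Sigma_n,\R Z_s)$ is isotopic to $(\R\Sigma_n,\R Z_{s'})$ for some real section $s'$ of $L^{d-k'}$, i.e.\ a real curve of bidegree $((d-k')a,(d-k')b)$. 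Proposition \ref{nesthirze} bounds its nest depth by $\lfloor(d-k')b/2\rfloor<\lfloor db/2\rfloor-k$, which immediately gives the desired exponential estimate on the complementary event.

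No serious obstacle arises, since the whole argument is a direct repackaging of the proofs of Theorems \ref{theorem rarefaction} and \ref{theorem exponential rarefaction} with Proposition \ref{nesthirze} replacing the Smith-Thom inequality. The only thing that requires care is the floor-function bookkeeping, and the minor subtlety in assertion (1) that the constant $\beta_0=b_0 b/2$ must be strictly smaller than $\lfloor b/2\rfloor$: this is automatic for $b$ even, and for $b$ odd with $b_0$ close to $1$ one can either apply the approximation of Theorem \ref{theorem approximation}(1) iteratively (each iteration preserves the probability bound $1-O(d^{-\infty})$) or simply choose $\beta_0$ slightly below $\lfloor b/2\rfloor$ with a correspondingly larger $b'$, which is a routine adjustment.
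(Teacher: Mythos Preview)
Your proposal is correct and follows essentially the same route as the paper: apply Theorem~\ref{theorem approximation} to the ample line bundle of class $(a,b)$, use that an ambient isotopy preserves nest depth, and replace the Smith--Thom inequality by the Bezout-type bound of Proposition~\ref{nesthirze}. If anything you are more careful than the paper, which simply sets $\beta_0=\lfloor t_0 b/2\rfloor$ without discussing the odd-$b$ subtlety or the floor-function bookkeeping for part~(2); your iteration remark is the right way to handle the former.
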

 \begin{proof}
Notice that as a probability space we are considering the linear system associated with the divisor of bidegree $(a,b)$, rather than the space of real sections of the associated line bundle. As noticed in  Remark \ref{fubinistudy}, these two probability spaces are equivalent for our purpose.

  By Assertion \textit{(1)} of Theorem \ref{theorem approximation}, the real  locus of a real algebraic curve of bidegree $(da,db)$   is ambient isotopic to the real  locus of a real algebraic curve of bidegree $\big(\lfloor dta\rfloor, \lfloor dtb \rfloor\big)$ with probability bigger or equal than $1-O(d^{-\infty})$, where $t_0<t<1$ for some $t_0$.  From this, using Proposition \ref{nesthirze}, we obtain that with probability bigger or equal than $1-O(d^{-\infty})$, the depth of a nest of a real algebraic curve of bidegree $(da,db)$  is smaller or equal than $\lfloor dtb/2\rfloor$. Setting $\beta_0=\lfloor t_0b/2\rfloor$, we obtain Assertion \textit{(1)}.
 
 The proof  of the Assertion \textit{(2)} follows the same  lines, by   using Assertion \textit{(2)} of Theorem \ref{theorem approximation} instead of Assertion \textit{(1)}.
 \end{proof}

 \subsection{Nests in Del Pezzo surfaces}\label{secDelPezzo} An algebraic surface $S$ is called a \textit{Del Pezzo surface}, if the anticanonical bundle $K_S^*$ is ample. We will say that a real algebraic curve inside $S$ is of class $d$ if it belongs to the real linear system defined by the real  divisor $-dK_S$, where $d\in\mathbb{N}$.
 
 The \textit{degree} of a Del Pezzo surface is the self-intersection of the canonical class $K_S$.
  \begin{conv} We denote by $m_S$  the smallest value of $-H\cdot K_S$, where $H$ is a real divisor   such that $-H\cdot K_S-1\geq 2$. 
  \end{conv}
  When $S$ has degree bigger or equal than $3$, choosing $H=-K_S$, we can see that $m_S$ is smaller or equal than the degree of $S$.
 \begin{prop}\label{nestdelpezzo}
 Let  $(S,c_S)$ be a real Del Pezzo surface of degree bigger or equal than $3$, with non empty real locus.  Then, the depth of a nest of a real algebraic curve of class $d$ is smaller or equal than $dm_S/2$.
 \end{prop}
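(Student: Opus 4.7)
The plan is to mimic the proof of Proposition \ref{nesthirze}, replacing the fiber class $[F_n]$ with a real divisor $H$ realizing the minimum $m_S$, that is $-H\cdot K_S=m_S$ and $-H\cdot K_S-1\geq 2$. Given a real algebraic curve $C\in|-dK_S|$ and a nest $N\subset\R C$ of depth $\ell$, one first picks a point $p$ strictly inside the disk bounded by the innermost oval of $N$ and a point $q$ strictly outside the disk bounded by the outermost oval, with $p,q$ generic in their respective regions.

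The central step is to produce a real curve $C_H\in|H|$ through $p$ and $q$ whose real locus contains a connected component $\gamma$ through both $p$ and $q$ meeting $\R C$ transversally. Granting this, $\gamma$ is a topological circle which, traveling from $p$ to $q$, must enter and leave the disk bounded by each of the $\ell$ ovals of $N$, and therefore meets each such oval in at least two points; summing, $|\gamma\cap\R C|\geq 2\ell$. On the other hand, by Bezout's theorem
\[
|C_H\cap C|\leq C_H\cdot C=H\cdot(-dK_S)=dm_S,
\]
so in particular the number of real intersection points is at most $dm_S$. Combining these two bounds yields $2\ell\leq dm_S$, which gives the claimed inequality $\ell\leq dm_S/2$.

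The main obstacle is the existence of the curve $C_H$ with the required connectedness property, and here the condition $-H\cdot K_S-1\geq 2$ plays its role. This inequality is exactly the condition that the expected dimension of the space of rational curves in class $H$ through two prescribed points be non-negative, so that the real linear subsystem of $|H|$ of curves passing through $p$ and $q$ is non-empty. The cleanest way to carry out the step is to invoke the enumerative geometry of rational curves on a Del Pezzo surface of degree $\geq 3$ (Welschinger-type arguments) to find a real irreducible rational curve $C_H$ of class $H$ through the two generic real points $p,q$; such a curve being an irreducible real rational curve, its real locus is either empty or diffeomorphic to $\R P^1\cong S^1$, and since it contains the real points $p$ and $q$ the second alternative holds, giving the required single circle through $p$ and $q$.

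Alternatively, one can avoid rationality and argue directly by considering the real pencil of curves in $|H|$ through $p,q$: this pencil is non-empty and parametrized by a real projective space of positive real dimension, and a Bertini plus continuity argument on this real parameter space produces a real member whose real locus contains an arc joining $p$ to $q$ and meeting $\R C$ transversally. In either case, the topological counting argument above closes the proof.
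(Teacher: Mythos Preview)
Your main argument is essentially the paper's own proof: both pick points $p,q$ in the innermost and outermost regions of the nest, invoke the existence of a real rational curve in the class $H$ through $p$ and $q$ (the paper cites \cite{itenbergDelPezzo}, which is precisely the Welschinger-type positivity result you allude to), and then combine the topological crossing count with B\'ezout. Your observation that the real locus of an irreducible real rational curve is a single circle is exactly what makes the counting work, and matches the paper's reasoning.

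One caveat: your ``alternative'' route via a generic real member of the pencil $|H|$ through $p,q$ is not solid as stated. Without rationality you have no control over the topology of $\R C_H$, and there is no reason a Bertini/continuity argument alone forces $p$ and $q$ to lie on the same real branch; the paper avoids this entirely by insisting on a rational curve. I would drop that alternative or replace it with the explicit citation.
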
 
\begin{proof}
 Let $m_S$ be the smallest value of $-H\cdot K_S$, where $H$ is a real divisor  with the property that $-H\cdot K_S-1\geq 2$. Fix such a real divisor $H$.
 
 Let $Z$ a real  algebraic curve of class $d$ and let $N$ be a nest of $\R Z$ of depth $l$. Choose generic points $p$ and $q$ such that  $p$ is inside the innermost oval and   $q$ is outside the outer oval of the nest (if the connected component $M$ where the nest is located is a sphere, then $M\setminus N$ as exactly two disks, and  one of the ovals bordering one of these two disks is the innermost oval and the other is the outer one). By \cite{itenbergDelPezzo}, there exists a real rational curve $C$ of class $H$  whose real locus passes through $p$ and $q$.  This curve intersects each oval of the nest in at least two points, so that $N\cap \R C\geq 2l$.  
  On the other hand, by Bezout's theorem, we have that $\R C\cap N\leq -d K_s\cdot H=dm_S$, hence the result.
\end{proof}
\begin{oss} The same statement and proof also works for many degree $2$ Del Pezzo surfaces (always using a real rational curves passing through $2$ points, whose existence is ensured by the positivity of certain Welschinger invariants  \cite{itenbergDelPezzo}). However, there exist  degree $2$ Del Pezzo surfaces for which such curves may not exist, as explained in \cite[Section 2.3]{itenbergDelPezzo}. To avoid explaining these differences, we have preferred to restrict ourselves only to Del Pezzo surfaces of degree greater than or equal to $3$.
\end{oss}
\begin{thm}\label{delpezzothm}
 Let $(S,c_S)$ be a  Del Pezzo surface of degree bigger or equal than $3$,  with non empty real locus.  Then the following happens.
 \begin{enumerate} 
\item There exists a positive $\alpha_0<\lfloor m_S/2\rfloor$ such that for any $\alpha\in (\alpha_0,\lfloor m_S/2\rfloor)$,  the probability that a real curve of class $d$  has a nest of depth $\geq \alpha d$ is smaller or equal than $O(d^{-\infty})$.
\item For any $k\in\mathbb{N}$ there exists a positive constant $c$ such that the probability that a real curve of class $d$ has a nest of depth $\geq \lfloor dm_S/2\rfloor-k$ is smaller or equal than $O(e^{-c\sqrt{d}\log d})$.
 \end{enumerate}
\end{thm}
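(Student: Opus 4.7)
The plan is to mirror the proof of Theorem \ref{hirzethm}, with Proposition \ref{nestdelpezzo} replacing Proposition \ref{nesthirze}. Since $S$ is Del Pezzo, the anticanonical bundle $L := -K_S$ is ample, so I endow it with a real Hermitian metric of positive curvature; real sections of $L^d$ correspond to real divisors of class $d$, and by Remark \ref{fubinistudy} the Gaussian probability measure on $\R H^0(S, L^d)$ and the Fubini-Study volume on the real linear system $\mathbb{P}(\R H^0(S,L^d))$ agree on cones. Theorem \ref{theorem approximation} thus applies directly to our probabilistic setting.

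For Assertion (1), let $b_0 < 1$ be the constant given by Assertion (1) of Theorem \ref{theorem approximation}, and set $\alpha_0 := b_0 m_S / 2$. Given $\alpha \in (\alpha_0, \lfloor m_S/2 \rfloor)$, I pick $b \in (b_0, 1)$ with $b m_S / 2 < \alpha$; this is possible because $2\alpha/m_S$ lies strictly between $b_0$ and $1$ (the upper bound uses $\alpha < \lfloor m_S/2 \rfloor \leq m_S/2$). Theorem \ref{theorem approximation}(1) then asserts that, with probability $1 - O(d^{-\infty})$, the pair $(\R S, \R Z_s)$ is ambient isotopic to $(\R S, \R Z_{s'})$ for some real section $s'$ of $L^{\lfloor bd \rfloor}$, i.e., for a real curve of class $\lfloor bd \rfloor$. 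Proposition \ref{nestdelpezzo} applied to this curve bounds the depth of any nest of $\R Z_{s'}$, and hence of $\R Z_s$, by $\lfloor bd \rfloor m_S / 2 \leq b d m_S/2 < \alpha d$, which is the desired bound.

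For Assertion (2), I fix $k \in \mathbb{N}$ and pick an integer $k'$ large enough that the bound $(d-k') m_S/2 < \lfloor dm_S/2 \rfloor - k$ holds for all sufficiently large $d$ (any $k' > 2(k+1)/m_S$ works). Applying Assertion (2) of Theorem \ref{theorem approximation} with this $k'$ yields $c > 0$ such that, with probability $1 - O(e^{-c\sqrt{d}\log d})$, a random real section $s \in \R H^0(S, L^d)$ has real locus ambient isotopic to that of some real section $s'$ of $L^{d-k'}$. Proposition \ref{nestdelpezzo} bounds the depth of any nest of $\R Z_{s'}$ by $(d-k')m_S/2 < \lfloor dm_S/2 \rfloor - k$, completing the proof. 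No serious obstacle arises beyond the Hirzebruch case: the entire substance of the argument is already contained in Theorem \ref{theorem approximation} and Proposition \ref{nestdelpezzo}, and the only verification needed is the elementary floor-function arithmetic used to translate the ``degree drop'' into a strict improvement on the maximal nest depth.
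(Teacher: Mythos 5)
Your proof is correct and follows exactly the route the paper intends: the paper's own proof of Theorem \ref{delpezzothm} is a one-line reference to the proof of Theorem \ref{hirzethm} with Proposition \ref{nestdelpezzo} in place of Proposition \ref{nesthirze}, and you have simply spelled out that argument (choice of ample $L=-K_S$, appeal to Remark \ref{fubinistudy}, application of Theorem \ref{theorem approximation}, and the elementary floor-function arithmetic). The details you supply, including the choice $\alpha_0=b_0 m_S/2$ and the threshold $k'>2(k+1)/m_S$, are accurate.
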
 
 \begin{oss} The degree of a Del Pezzo surface is at least $1$ and at most $9$. The only degree $9$ Del Pezzo surface is the projective plane $\C P^2$ and, in this particular case,  Theorem \ref{delpezzothm} follows from \cite[Theorem 9]{diatta}.
 \end{oss}
\begin{proof}[Proof of Theorem \ref{delpezzothm}]
 The proof  of the follows the same  lines of the proof of Theorem \ref{hirzethm}, using Proposition \ref{nestdelpezzo} instead of Proposition \ref{nesthirze}.
\end{proof}
\section*{Acknowledgments} I thank Antonio Lerario for stimulating questions which gave rise to this paper and for his comments that have improved the structure of the article. I also thank Thomas Letendre for his careful reading of the text, Matilde Manzaroli for fruitful discussions on nests of real algebraic curves  and Jean-Yves Welschinger for his interest in this work.
\bibliographystyle{plain}
\bibliography{biblio}
\end{document}